\documentclass[11pt]{article}

\usepackage[a4paper,margin=1.08in]{geometry}
\usepackage[T1]{fontenc}
\usepackage{lmodern}
\usepackage{microtype}
\usepackage{amsmath,amssymb,amsthm,mathtools}
\usepackage{enumitem}
\usepackage{xcolor}
\usepackage{hyperref}
\usepackage{aliascnt}
\usepackage[nameinlink,capitalize,noabbrev]{cleveref}
\usepackage{mathrsfs}

\hypersetup{
  colorlinks=true,
  linkcolor=blue!45!black,
  citecolor=blue!45!black,
  urlcolor=blue!55!black,
  pdftitle={v1}
}

\setlist[itemize]{leftmargin=1.6em,itemsep=0.2em,topsep=0.3em}
\setlist[enumerate]{leftmargin=1.8em,itemsep=0.2em,topsep=0.3em}

\newtheorem{theorem}{Theorem}[section]

\newaliascnt{proposition}{theorem}
\newtheorem{proposition}[proposition]{Proposition}
\aliascntresetthe{proposition}

\newaliascnt{corollary}{theorem}
\newtheorem{corollary}[corollary]{Corollary}
\aliascntresetthe{corollary}

\newaliascnt{lemma}{theorem}
\newtheorem{lemma}[lemma]{Lemma}
\aliascntresetthe{lemma}

\newaliascnt{conjecture}{theorem}

\aliascntresetthe{conjecture}

\theoremstyle{definition}

\newaliascnt{definition}{theorem}
\newtheorem{definition}[definition]{Definition}
\aliascntresetthe{definition}

\newaliascnt{example}{theorem}
\newtheorem{example}[example]{Example}
\aliascntresetthe{example}

\newaliascnt{remark}{theorem}
\newtheorem{remark}[remark]{Remark}
\aliascntresetthe{remark}

\newcommand{\cA}{\mathcal A}
\newcommand{\cI}{\mathcal I}

\newcommand{\cO}{\mathcal O}
\newcommand{\cQ}{\mathcal Q}
\newcommand{\cE}{\mathcal E}
\newcommand{\cG}{\mathcal G}
\newcommand{\cL}{\mathcal L}

\newcommand{\cP}{\mathcal P}

\newcommand{\PSH}{\operatorname{PSH}}
\newcommand{\tr}{\operatorname{tr}}
\newcommand{\adj}{\operatorname{adj}}

\newcommand{\Jump}{\operatorname{Jump}}

\newcommand{\Supp}{\operatorname{Supp}}

\title{\bfseries Finite Gram Scalarization and Further Properties of Multiplier Submodule Sheaves}
\author{Jingcao Wu}
\date{}

\begin{document}
\maketitle

\begin{abstract}
Let $(E,h)$ be a singular Hermitian vector bundle on a complex manifold $X$, and let
\[
 \cE(h)_x=\{F\in\cO(E)_x:|F|_h^2\in L^1_{\mathrm{loc},x}\}
\]
be its multiplier submodule sheaf. We introduce a finite plurisubharmonic Gram scalarization condition under which the higher-rank integrability problem reduces to finitely many scalar multiplier ideals. This reduction yields coherence and strong openness without imposing a general positivity hypothesis. When the scalar weights and the varying weight have analytic singularities, it also gives a theory of module jumping numbers, including a simultaneous-residue criterion for actual jumps. Finally, we study the induced Skoda filtration: Artin--Rees yields eventual periodicity, Tor controls whether periodicity starts at the scalar threshold, and a direct-image quotient measures the obstruction to descent.
\end{abstract}

\noindent
\textbf{Keywords:}
Multiplier submodule sheaves, Coherence, Jumping number, Skoda periodicity

\medskip

\noindent
\textbf{2020 Mathematics Subject Classification:}
32L10, 32U05, 14F18.

\section{Introduction}
Multiplier submodule sheaves are higher-rank analogues of multiplier ideal sheaves. For a singular Hermitian metric $h$ on a holomorphic vector bundle $E$, the local integrability condition defining $\cE(h)$ is matrix-valued and usually cannot be reduced to finitely many scalar multiplier ideals. The purpose of this paper is to isolate a finite reduction hypothesis under which several scalar results---coherence, strong openness, discreteness of jumping numbers, and Skoda-type periodicity---can be transferred to multiplier submodules.

The basic hypothesis is a finite plurisubharmonic Gram scalarization: after a proper modification $\pi:X'\to X$, the pull-back metric is locally uniformly comparable to a finite sum
\[
 \sum_{\nu=1}^{N}e^{-\varphi_\nu}A_\nu^*A_\nu,
\]
where the $\varphi_\nu$ are plurisubharmonic and the $A_\nu$ are holomorphic bundle morphisms. Under this hypothesis, the multiplier submodule upstairs is an intersection of inverse images of scalar multiplier ideals. A canonical-form transformation rule under modifications then gives the desired properties downstairs.

The main results are as follows.

\paragraph{Coherence and strong openness.}
It is classical that multiplier ideal sheaves are coherent \cite{Nad90} and satisfy strong openness \cite{GZ15}, whereas their higher-rank analogues are not automatic. Coherence of $\cE(h)$ is known, for example, when $(E,h)$ is Nakano semi-positive \cite{Ina22}; when $(E,h)$ is Griffiths semi-positive and the unbounded locus of $\det h$ is discrete \cite{Ina22b}; when $(E,h)$ is Griffiths semi-positive and $\det h$ has analytic singularities \cite{Zou26}; or when the base manifold has no non-trivial analytic subvarieties and $(E,h)$ is Griffiths semi-positive \cite{Yan25}. Higher-rank strong openness has also been studied under certain positivity \cite{Lem17,LXYZ26,LYZ22,Wu26}. Our first result gives a different finite-reduction criterion:  
\begin{theorem}\label{t11}
Suppose that $(E,h)$ admits a finite plurisubharmonic Gram scalarization. Then $\cE(h)$ is coherent. Moreover, for every quasi-plurisubharmonic function $\psi$ on $X$,
\[
\cE(h)=\bigcup_{\varepsilon>0}\cE(h\,e^{-\varepsilon\psi}).
\]
\end{theorem}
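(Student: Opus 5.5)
The plan is to reduce everything to scalar multiplier ideals on the modification $\pi:X'\to X$, and then push the result back down to $X$.

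\textbf{Step 1 (upstairs description).} Fix $\pi:X'\to X$ and a local comparison $\pi^*h\simeq H:=\sum_{\nu}e^{-\varphi_\nu}A_\nu^*A_\nu$, with uniform constants on small open sets $U'\subset X'$. Let $J_\pi$ be the Jacobian of $\pi$, and let $K_{X'/X}$ be the relative canonical divisor with local equation $J_\pi$. By change of variables, $F\in\cE(h)_x$ if and only if $\int_{\pi^{-1}(V)}|\pi^*F|^2_{\pi^*h}|J_\pi|^2<\infty$ for a small neighbourhood $V$ of $x$. This condition is equivalent to
\[
\sum_{\nu}\int |A_\nu\pi^*F|^2 e^{-\varphi_\nu}|J_\pi|^2<\infty .
\]
Choosing a local frame of the target of $A_\nu$, each term is a finite sum of scalar integrals of holomorphic functions. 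So the condition says that each component of $A_\nu(\pi^*F)$ lies in $\cI(\varphi_\nu)\otimes\cO(-K_{X'/X})$. Hence $\pi^*F$ lies in the sheaf
\[
\cE':=\bigcap_\nu A_\nu^{-1}\bigl(\cI(\varphi_\nu)\otimes\cO(-K_{X'/X})\otimes E_\nu\bigr)\subset\cO(\pi^*E).
\]
This is the canonical-form transformation rule mentioned in the introduction, so $\cE(h)=\cO(E)\cap\pi_*\cE'$. Here $\pi_*\cO(\pi^*E)=\cO(E)$ holds by the projection formula and normality of $X$, as long as $\pi$ is proper bimeromorphic with connected fibres.

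\textbf{Step 2 (coherence).} By Nadel each $\cI(\varphi_\nu)$ is coherent, so each $A_\nu^{-1}(\cdots)$ is coherent, being the kernel of a map of coherent sheaves $\cO(\pi^*E)\to E_\nu\otimes\cO/(\cI(\varphi_\nu)(-K))$. A finite intersection of coherent subsheaves is coherent, so $\cE'$ is coherent. Grauert's direct image theorem then gives coherence of $\pi_*\cE'$, which is a subsheaf of $\pi_*\cO(\pi^*E)=\cO(E)$, and so $\cE(h)$ is coherent. The point needing care is that the comparison constants are only local on $X'$. Since $\pi^{-1}(\overline V)$ is compact, finitely many charts suffice, and the membership condition is local on $X'$, so $\cE'$ is well defined globally.

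\textbf{Step 3 (strong openness).} Replacing $h$ by $he^{-\varepsilon\psi}$ replaces $\varphi_\nu$ by $\varphi_\nu+\varepsilon\pi^*\psi$. This is again a finite psh Gram scalarization, after locally absorbing the smooth part of the quasi-psh function $\psi$ into the comparison constants. So $\cE(he^{-\varepsilon\psi})=\cO(E)\cap\pi_*\cE'_\varepsilon$, where $\cE'_\varepsilon$ is built from $\cI(\varphi_\nu+\varepsilon\pi^*\psi)$. Guan--Zhou strong openness, in its version with a quasi-psh twist $\cI(\varphi)=\bigcup_\varepsilon\cI(\varphi+\varepsilon\psi)$, gives $\cE'=\bigcup_\varepsilon\cE'_\varepsilon$ as an increasing union of coherent sheaves. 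Twisting by the line bundle $\cO(-K)$ commutes with this union. Since the union is increasing and finitely many $\nu$ are involved, one $\varepsilon$ works for all $\nu$ at each point.

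To pass to $\pi_*$, I would use Noetherianity. The increasing family $\pi_*\cE'_\varepsilon$ of coherent subsheaves of $\cO(E)$ stabilizes on a relatively compact neighbourhood of $x$. Its stable value $\mathcal S$ is contained in $\cE(h)$. For the converse, take $F\in\cE(h)_x$. Then $\pi^*F$ is a section of $\cE'$ over a neighbourhood of the compact fibre $\pi^{-1}(x)$. Covering that fibre by finitely many stalks, and using that each stalk lies in some $\cE'_\varepsilon$ by coherence plus the stalkwise union, one $\varepsilon$ works near the whole fibre. This is the main obstacle: making stalkwise strong openness uniform over a compact fibre. I would handle it with Noetherian stabilization of $\cE'_\varepsilon$ on a neighbourhood of $\pi^{-1}(x)$, which holds because these are coherent subsheaves of the coherent sheaf $\cE'$. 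Once one $\varepsilon$ works near the fibre, $F\in\pi_*\cE'_\varepsilon$, which gives the equality.
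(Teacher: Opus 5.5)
Your route is the paper's: the Gram comparison reduces membership upstairs to finitely many scalar multiplier-ideal conditions. Coherence then comes from a kernel of coherent sheaves, Grauert's direct image theorem and the transformation rule. Strong openness comes from stalkwise mixed Guan--Zhou openness, made uniform by a finite cover of the compact fibre $\pi^{-1}(x)$.

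\textbf{The gap: the twist in Step 1 has the wrong sign.} This error propagates into Steps 2 and 3. Your integral condition $\int |A_\nu\pi^*F|^2e^{-\varphi_\nu}|J_\pi|^2<\infty$ is correct. It says that each component $g$ of $A_\nu(\pi^*F)$ satisfies $J_\pi g\in\cI(\varphi_\nu)$, i.e.\ $g\in\cO\cap\cI(\varphi_\nu)\otimes\cO(K_{X'/X})$. But $\cO(-K_{X'/X})$ is the ideal $(J_\pi)$. So your sheaf $\cI(\varphi_\nu)\otimes\cO(-K_{X'/X})=J_\pi\,\cI(\varphi_\nu)$ imposes the much stronger condition $g/J_\pi\in\cI(\varphi_\nu)$. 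As written, the identity $\cE(h)=\pi_*\cE'$ is false. Take $h$ smooth, $N=1$, $\varphi_1=0$, $A_1=\mathrm{id}$, and $\pi$ the blow-up of a point in $\mathbb C^n$ with exceptional divisor $D$. Then your $\cE'$ is $\cO(-(n-1)D)\otimes\cO(\pi^*E)$, and $\pi_*\cE'=\mathfrak m^{n-1}\cO(E)\neq\cO(E)=\cE(h)$ for $n\geq2$.

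\textbf{The fix.} Define $\cE'$ locally by $\{G\in\cO(\pi^*E): J_\pi\,G\in\cE(\pi^*h)\}$. This is well defined because $J_\pi$ is determined up to a unit. Locally it is the kernel of $G\mapsto([J_\pi A_\nu G])_\nu\in\bigoplus_\nu(\cO/\cI(\varphi_\nu))^{m_\nu}$, so it is coherent. Equivalently, do what the paper does and work with $E$-valued canonical forms. There $\pi_*\bigl(K_{X'}\otimes\cE(\pi^*h)\bigr)=K_X\otimes\cE(h)$ absorbs the Jacobian automatically, and $\cE(\pi^*h)$ itself needs no twist. With this correction your Steps 2 and 3 go through; in Step 3, apply mixed openness to the germs $J_\pi g$.

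\textbf{A simplification in Step 3.} The Noetherian stabilization is not needed. Your finite-cover argument already concludes, exactly as in the paper:
\begin{itemize}
\item shrink each neighbourhood so that the germ condition holds on it;
\item take the minimum of the finitely many $\varepsilon_y$;
\item use properness to obtain some $\pi^{-1}(W)$ inside the cover.
\end{itemize}
The Noetherian route also works and gives the extra information that $\varepsilon$ can be chosen uniformly in $F$.
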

The full discussion is in \cref{sec:coherence}. 

\paragraph{Comparison.}
The quotient metrics discussed in \cite{Hos17} provide a natural example of finite plurisubharmonic Gram scalarization, and the coherence proved there can be viewed as a special case of \cref{t11}. Moreover, if the quotient metric is induced from a general smooth source metric, it still admits a finite plurisubharmonic Gram scalarization but need not be Griffiths semi-positive. We also obtain the following separation result.
\begin{theorem}\label{t12}
There exists a Griffiths semi-positive singular Hermitian metric on the trivial rank-two
bundle over $\Delta$ which admits no finite plurisubharmonic Gram scalarization in any
neighbourhood of the origin.
\end{theorem}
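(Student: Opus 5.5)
We will argue by contradiction, using a rigidity property that any finite Gram scalarization forces in dimension one, and then build a Griffiths semi-positive metric that violates it. First we remove the modification. $\Delta$ is a smooth curve, so every proper modification $\pi:X'\to U$ of a neighbourhood $U$ of $0$ is a biholomorphism. Hence a finite plurisubharmonic Gram scalarization near $0$ means the following: after shrinking $U$, there are subharmonic $\varphi_1,\dots,\varphi_N$, holomorphic $2\times 2$ matrices $A_\nu$ and a constant $C\ge 1$ such that
\[
C^{-1}\sum_{\nu}e^{-\varphi_\nu}A_\nu^*A_\nu\;\le\; h\;\le\; C\sum_{\nu}e^{-\varphi_\nu}A_\nu^*A_\nu \quad\text{on } U .
\]

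The obstruction is a finiteness statement for directional singularities. For a constant vector $u\in\mathbb C^2\setminus\{0\}$, the comparison gives $\log|u|_h^2=\max_\nu\bigl(-\varphi_\nu+\log|A_\nu u|^2\bigr)+O(1)$. From this we read off a directional invariant $c(u)$ at $0$, for instance the Lelong number of the subharmonic function $-\log|u|_h^2$ there, or equivalently the integrability threshold $\sup\{c:|u|_h^{2c}\in L^1_{\mathrm{loc},0}\}$. By the display, $c(u)$ is determined by the numbers $\nu(\varphi_\nu,0)$ and $\operatorname{ord}_0(A_\nu u)$. For fixed $\nu$ the order $\operatorname{ord}_0(A_\nu u)$ is constant for $u$ outside finitely many lines in $\mathbb C^2$, namely the lines $\ker A_\nu(0)$ and the finitely many directions where the Taylor expansion drops further. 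In the rank-one case this is the kernel line; in general it follows from a Weierstrass-type argument applied to $u\mapsto A_\nu(z)u$. It follows that $u\mapsto c(u)$, viewed on $\mathbb P^1$, is constant off a finite set. This is the lemma we extract: under a finite scalarization, only finitely many directions at $0$ are exceptional.

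Next we construct $h$ by prescribing its dual. Fix infinitely many pairwise distinct unit vectors $w_k\in\mathbb C^2$, subharmonic weights $\psi_k$ with increasingly strong but summable singularity at $0$ (e.g. $\psi_k=2m_k\log|z|$ with $m_k\uparrow\infty$), and small constants $\varepsilon_k>0$. Put
\[
H^*=\mathrm{Id}+\sum_k \varepsilon_k e^{\psi_k}w_kw_k^*\quad\text{or}\quad H^*=\sum_k\varepsilon_ke^{\psi_k}w_kw_k^* ,
\]
and set $h=(H^*)^{-1}$. For every holomorphic section $v$ of the dual, $\log|v|^2_{H^*}=\log\sum_k\varepsilon_k e^{\psi_k+\log|\langle v,w_k\rangle|^2}$ is subharmonic, being the log of a convergent sum of exponentials of subharmonic functions. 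So $H^*$ is Griffiths semi-negative and $h$ is Griffiths semi-positive. Choosing the parameters so that $H^*$ degenerates at $0$ (this is where the pure-sum version, or a suitable rescaling by $e^{\psi_0}$, is used) makes $h$ genuinely singular.

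The main obstacle is the computation showing that infinitely many directions are exceptional, so that the lemma above yields the contradiction. We must estimate $|u|_h^2=u^*(H^*)^{-1}u$ near $0$ for $u$ orthogonal to $w_k$ and compare it with a generic direction. The plan is to work with $2\times2$ adjugates: $(H^*)^{-1}=\operatorname{adj}(H^*)/\det H^*$, and $\det H^*$ is a sum of terms $\varepsilon_j\varepsilon_ke^{\psi_j+\psi_k}|\det(w_j,w_k)|^2$. The parameters $m_k$, $\varepsilon_k$ and the angles of the $w_k$ are tuned, growing fast enough, so that for $u=w_k^\perp$ the leading term of $u^*\operatorname{adj}(H^*)u$ loses the $k$-th summand and acquires a strictly different vanishing order. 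Then $c(w_k^\perp)$ differs from the generic value for every $k$. If exact orders prove hard to control, the fallback is to show only that $c(w_k^\perp)$ takes infinitely many distinct values, which contradicts the finiteness lemma just as well.
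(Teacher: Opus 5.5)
Your proposal has a genuine gap: the finiteness property you extract from a scalarization holds for \emph{every} singular Hermitian metric on a rank-two bundle, so no construction can violate it. Take $c(u)$ to be the integrability threshold, as you allow. For constant $u_1,u_2\in\mathbb C^2$ one has $|u_1+u_2|_h^2\le 2|u_1|_h^2+2|u_2|_h^2$, hence $|u_1+u_2|_h^{2c}\le 4^c\bigl(|u_1|_h^{2c}+|u_2|_h^{2c}\bigr)$. Thus $I_c=\{u\in\mathbb C^2:|u|_h^{2c}\in L^1_{\mathrm{loc},0}\}$ is a linear subspace, decreasing in $c$. A chain of subspaces of $\mathbb C^2$ contains at most one line $L$, so $c(u)$ is constant on $\mathbb P^1$ minus at most the point $[L]$, whatever $h$ is. The same ultrametric inequality $\lambda(u_1+u_2)\ge\min\{\lambda(u_1),\lambda(u_2)\}$ holds for growth exponents such as $\lambda(u)=\liminf_{z\to0}\log|u|_h^2/\log|z|$. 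Hence the step you call the main obstacle cannot be carried out for any $h$. Your fallback is also impossible, since $c$ takes at most two values.

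Your concrete choice illustrates this. Take $\psi_k=2m_k\log|z|$ with $m_1<m_2\le m_3\le\cdots$, $w_1,w_2$ independent, and $\sum_k\varepsilon_k<\infty$. On $|z|<1$ the tail is dominated by the first two terms, so $H^*\asymp\varepsilon_1|z|^{2m_1}w_1w_1^*+\varepsilon_2|z|^{2m_2}w_2w_2^*$. Therefore $|u|_h^2\asymp\varepsilon_1^{-1}|z|^{-2m_1}|\ell_1(u)|^2+\varepsilon_2^{-1}|z|^{-2m_2}|\ell_2(u)|^2$, where $u=\ell_1(u)w_1+\ell_2(u)w_2$. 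That is itself a finite plurisubharmonic Gram scalarization, with the single exceptional direction $w_1$.

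What is missing is an invariant that records the \emph{direction} in which $h$ degenerates rather than its rate. The paper uses rank-one cluster points of $\widehat h=h/\operatorname{tr}h$ along sequences $z_j\to0$. Write $A_\nu=z^{m_\nu}(B_\nu+O(z))$ with $B_\nu\ne0$ and $C_\nu=B_\nu^*B_\nu$. Then every normalized limit of $\sum_\nu e^{-\varphi_\nu}A_\nu^*A_\nu$ is a convex combination of the fixed matrices $C_\nu/\operatorname{tr}C_\nu$, and positivity forces a rank-one limit to coincide with one of them. The scalar weights can redistribute mass among finitely many fixed leading directions, but they cannot rotate them. Uniform comparability transfers rank-one limits, because $\langle Gv,v\rangle/\operatorname{tr}G\le C^2\langle Hv,v\rangle/\operatorname{tr}H$.

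On the construction side, the dominant summand must change infinitely often as $z\to0$, which ordered pure powers $|z|^{2m_k}$ cannot do. The paper takes $k=|z|^2I_2+\sum_n e^{-4^n}|z|^{2^{1-n}}P_n$, with $P_n$ the projection onto $(\cos n\theta,\sin n\theta)$ and $\theta/\pi\notin\mathbb Q$. At $-\log|z_n|^2=3\cdot 2^{3n}$ the $n$-th term dominates all others, so $\widehat k(z_n)-P_n\to0$. Since $\widehat h=I-\widehat k$ for $2\times2$ matrices, $\widehat h(z_n)$ has infinitely many rank-one cluster points. In that example $\det k\ge\delta\,e^{-C(-\log|z|^2)^{2/3}}$, so $|u|_h^{2c}$ is locally integrable for every $c$ and every $u$. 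Your invariant is blind to exactly the phenomenon that obstructs scalarization.
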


Hence finite plurisubharmonic Gram scalarization and the classic positivity of vector bundles are, in general, independent. See \cref{sec:quotient} and \cref{sec:failure}.

\paragraph{Jumping numbers.}
Assume that the scalarization weights and the varying weight have analytic singularities. Then the module jumping numbers are discrete, and they are rational when the singularity coefficients are rational. Divisorial data determine the candidate values, but an actual module jump is detected by the image of the Gram morphisms, equivalently by a simultaneous residue map. These results are developed in \cref{sec:jumping-numbers}.

\paragraph{Skoda periodicity}
Classical Skoda theory gives the eventual recursion
\[
\mathcal{I}(\mathfrak{a}^{t+1})=\mathfrak{a}\,\mathcal{I}(\mathfrak{a}^{t})
\]
and its variants; the analytic division theorem goes back to Skoda \cite{Sko72}, and the resulting periodic behaviour of jumping numbers was systematically studied in \cite{ELSV04}. We are not aware of a corresponding periodicity theorem for multiplier submodule sheaves of singular Hermitian vector bundles. In \cref{sec:skoda}, we study the integer-indexed filtration associated with a rational-coefficient ideal-type weight. On a scalarization chart $U\subset X'$, write
\[
\pi^*\psi=\frac rs\log\!\left(\sum_{\beta=1}^{q}|g_\beta|^2\right)+O(1),\qquad r,s\in\mathbb N,\quad \gcd(r,s)=1,
\]
and set $\mathfrak a=(g_1,\ldots,g_q)\subset\cO_U$. Then:
\begin{theorem}\label{t13}
Suppose that $(E,h)$ admits a finite plurisubharmonic Gram scalarization whose scalar weights have analytic singularities. For every scalarization chart $U$ as above, there exists an integer $k_1=k_1(U)$ such that
\[
\mathcal E\bigl(\pi^*(h\, e^{-(k+s)\psi})\bigr)|_U=\mathfrak a^r\,\mathcal E\bigl(\pi^*(h \,e^{-k\psi})\bigr)|_U,\qquad k\ge k_1.
\]
\end{theorem}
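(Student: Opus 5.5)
On the chart $U$, the finite Gram scalarization says that $\pi^*h$ is uniformly comparable to $\sum_{\nu=1}^N e^{-\varphi_\nu}A_\nu^*A_\nu$. Twisting by $e^{-k\pi^*\psi}$ only adds $k\pi^*\psi$ to each weight, so the twisted metric is still a finite Gram scalarization. The first step is therefore to record the description
\[
\cE\bigl(\pi^*(h\,e^{-k\psi})\bigr)|_U=\bigcap_{\nu=1}^N A_\nu^{-1}\Bigl(\cI\bigl(\varphi_\nu+k\pi^*\psi\bigr)\cdot\cO_U\Bigr),
\]
where $A_\nu^{-1}$ denotes the preimage under $A_\nu:\cO(E')|_U\to\cO_U^{m_\nu}$, and $\cI(\cdot)\cdot\cO_U^{m_\nu}$ is componentwise. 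This is the upstairs intersection formula from the coherence section. Denote this module by $M_k$.

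Next I would make the scalar weights algebraic. Each $\varphi_\nu$ has analytic singularities, and $\psi$ is $\frac rs\log|\mathfrak a|^2$, so the Skoda/Ein--Lazarsfeld--Smith--Varolin periodicity applies to each mixed ideal $\cI(\varphi_\nu+k\pi^*\psi)$. Write $\varphi_\nu\sim c_\nu\log|\mathfrak b_\nu|^2$ after shrinking $U$. Adding $s$ to $k$ adds $r\log|\mathfrak a|^2$ to the weight. The mixed Skoda theorem (analytic division in Skoda's form, valid for weights $\varphi+\ell\log|\mathfrak a|^2$ with $\ell\ge \dim+q$, or a log resolution argument) then gives
\[
\cI\bigl(\varphi_\nu+(k+s)\pi^*\psi\bigr)=\mathfrak a^r\,\cI\bigl(\varphi_\nu+k\pi^*\psi\bigr)\qquad (k\ge k_0).
\]
So each scalar filtration is eventually $\mathfrak a^r$-periodic.

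The easy inclusion $\mathfrak a^rM_k\subset M_{k+s}$ holds for every $k$, since $|g^\alpha|^2e^{-r\log|\mathfrak a|^2}$ is bounded.

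The main obstacle is the reverse inclusion. Here an intersection of preimages does not commute with multiplication by $\mathfrak a^r$: knowing that $A_\nu F\in\mathfrak a^r J_\nu$ for each $\nu$ does not immediately put $F$ in $\mathfrak a^r\bigcap_\nu A_\nu^{-1}J_\nu$. I plan to use Artin--Rees, consistent with the abstract. Consider the map
\[
A=(A_1,\dots,A_N):\cO(E')|_U\to P=\bigoplus_\nu\cO_U^{m_\nu},
\]
and the submodules $J_k=\bigoplus_\nu\cI(\varphi_\nu+k\pi^*\psi)\cO^{m_\nu}$. For $k\ge k_0$ we have $J_{k+js}=\mathfrak a^{rj}J_k$. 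Fix a residue class $k\equiv k'$ mod $s$ with $k'\ge k_0$. Then $M_{k'+js}=A^{-1}(\mathfrak a^{rj}J_{k'})$.

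To handle this, pass to the quotient $Q=P/J_{k'}$ and the image module $L=A(\cO(E'))+J_{k'}\subset P$. Then
\[
A^{-1}(\mathfrak a^{rj}J_{k'})=A^{-1}\bigl(L\cap\mathfrak a^{rj}J_{k'}\bigr)\supseteq\ker A+A^{-1}(\cdots).
\]
Artin--Rees applied to $L\cap\mathfrak a^{rj}J_{k'}$ inside $J_{k'}$ (the submodule $A(\cO(E'))\cap J_{k'}$ of $J_{k'}$) gives
\[
A(\cO(E'))\cap\mathfrak a^{rj}J_{k'}=\mathfrak a^{r(j-c)}\bigl(A(\cO(E'))\cap\mathfrak a^{rc}J_{k'}\bigr)\qquad (j\ge c).
\]
This requires the Artin--Rees number to be compatible with powers $\mathfrak a^r$; it is, after applying Artin--Rees for the ideal $\mathfrak a^r$. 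Pulling back by $A$ and noting $\ker A\subset M_k$ for all $k$ yields
\[
M_{k+s}=\mathfrak a^rM_k+\ker A .
\]

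It remains to absorb $\ker A$. On $\ker A$ the Gram form vanishes, so comparability forces $|F|^2_{\pi^*h}\equiv0$. Hence $\ker A$ consists of sections lying in a subbundle where the metric degenerates. I would check whether the comparability hypothesis forces $\ker A=0$ generically, in which case $\ker A$ is torsion-free zero. If it does not, I will state the result modulo $\ker A$, which is presumably what the later Tor discussion handles.

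Finally, I will take $k_1$ to be the maximum over the $s$ residue classes and over the local Noetherian bound, shrinking $U$ if needed so that the bounds are uniform (compactness/coherence).
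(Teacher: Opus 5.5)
Your route is the paper's. You identify $\cE(\pi^*h_k)|_U$ with $M_k=A^{-1}(J_k)$ and use scalar Skoda periodicity $J_{k+s}=\mathfrak a^rJ_k$ for $k\ge k_0$. You then apply Artin--Rees for the ideal $\mathfrak a^r$ to $A(\mathcal V)\cap J_{k'}\subset J_{k'}$ (with $\mathcal V=\cO(\pi^*E)|_U$) on each residue class modulo $s$, and take the maximum over classes after shrinking $U$. The Skoda threshold you quote differs from the paper's, which uses $p=\min\{n,q-1\}$ together with strong openness, but any threshold suffices here. The detour through $Q$ and $L$ is not needed.

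The genuine gap is the last step. You reach $M_{k+s}=\mathfrak a^rM_k+\ker A$, leave $\ker A$ open, and propose stating the result modulo $\ker A$. That does not prove the theorem. Since $\ker A\subset M_j$ for every $j$, the stated equality would give $(\ker A)_x\subset\bigcap_q\mathfrak a^{rq}\mathcal V_x=0$ at every $x\in V(\mathfrak a)$, by Krull's intersection theorem. So injectivity of $A$ is exactly what the statement requires, not a technicality. Nor is this what the later Tor discussion handles: that concerns whether periodicity already begins at $k_0$ rather than $k_1$.

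The missing observation is short. A singular Hermitian metric is positive definite almost everywhere. Because $\pi$ is biholomorphic off a null analytic set, the same holds for $\pi^*h$ on $U$. If $A_\nu F=0$ for all $\nu$, the upper bound in \eqref{eq:gram-comparison} gives $|F|^2_{\pi^*h}=0$ a.e., hence $F=0$ a.e., hence $F\equiv0$ by continuity. Thus $A$ is injective and $A^{-1}\bigl(A(\mathfrak a^rM_k)\bigr)=\mathfrak a^rM_k$. Your Artin--Rees step then yields $M_{k+s}=\mathfrak a^rM_k$ for $k\ge k_1$, exactly as in the paper.
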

We also obtain a Tor criterion for periodicity to begin already at the scalar Skoda threshold. For the descent statement, suppose locally on a relatively compact neighbourhood $W\Subset X$ that
\[
\psi=\frac rs\log\!\left(\sum_{j=1}^{q}|b_j|^2\right)+O(1),\qquad \mathfrak b=(b_1,\ldots,b_q),
\]
put $X'_W=\pi^{-1}(W)$, $\pi_W=\pi|_{X'_W}$, $\mathfrak a=\mathfrak b\,\cO_{X'_W}$, and $h_k=h\, e^{-k\psi}$. After choosing a finite scalarization cover of $\pi^{-1}(\overline W)$, let $k_1$ be a common stabilization index. Then:
\begin{theorem}\label{t14}
For every integer $k\ge k_1$,
\[
\mathfrak b^r\,\mathcal E(h_k)|_W\subseteq\mathcal E(h_{k+s})|_W.
\]
Let $\mathcal F_k=K_{X'_W}\otimes\mathcal E(\pi_W^*h_k)$. Equality holds if and only if
\[
\pi_{W*}(\mathfrak a^r\,\mathcal F_k)=\mathfrak b^r\,\pi_{W*}\mathcal F_k.
\]
\end{theorem}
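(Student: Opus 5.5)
The plan is to push everything to $X'_W$, apply \cref{t13} chartwise, and come back down with $\pi_{W*}$. The link between the two levels is the canonical-form transformation rule for proper modifications stated earlier in the paper:
\[
K_X\otimes\mathcal E(h_k)=\pi_*\bigl(K_{X'}\otimes\mathcal E(\pi^*h_k)\bigr),
\qquad\text{so}\qquad
K_W\otimes\mathcal E(h_k)|_W=\pi_{W*}\mathcal F_k .
\]
Since $K_W$ is a line bundle, twisting by it does not change any ideal-times-module identity. Hence both the inclusion and the equality criterion may be read after tensoring with $K_W$.

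\textbf{Gluing on $X'_W$.} On each chart $U$ of the finite scalarization cover of $\pi^{-1}(\overline W)$ we have
\[
\pi^*\psi=\tfrac rs\log\textstyle\sum_j|b_j\circ\pi|^2+O(1).
\]
The local ideal $(g_\beta)$ used in \cref{t13} therefore equals $\mathfrak a|_U=\mathfrak b\,\cO_U$, up to the usual ambiguity of integral closure. I would fix the generators $g_\beta=\pi^*b_\beta$ so that the ideals coincide exactly. \Cref{t13} then gives, for $k\ge k_1$ (the common index), equality on every chart. These local equalities are identities between subsheaves of $\cO(\pi^*E)$, so they glue:
\[
\mathfrak a^r\,\mathcal E(\pi_W^*h_k)=\mathcal E(\pi_W^*h_{k+s}),
\qquad\text{hence}\qquad
\mathfrak a^r\mathcal F_k=\mathcal F_{k+s}.
\]

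\textbf{Inclusion.} For a local section $u$ of $\pi_{W*}\mathcal F_k$ and $b\in\mathfrak b^r$, the section $(\pi^*b)\,u$ lies in $\mathfrak a^r\mathcal F_k=\mathcal F_{k+s}$. Thus
\[
\mathfrak b^r\,\pi_{W*}\mathcal F_k\subseteq\pi_{W*}(\mathfrak a^r\mathcal F_k)=\pi_{W*}\mathcal F_{k+s}.
\]
Translating with the transformation rule gives $\mathfrak b^r\mathcal E(h_k)|_W\subseteq\mathcal E(h_{k+s})|_W$. One could also see this directly: $|b|^2e^{-s\psi}\lesssim e^{-(r-r)\cdots}$ is bounded, so multiplication by $\mathfrak b^r$ preserves integrability.

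\textbf{Equality criterion.} Equality $\mathfrak b^r\mathcal E(h_k)|_W=\mathcal E(h_{k+s})|_W$ holds exactly when
\[
\mathfrak b^r\,\pi_{W*}\mathcal F_k=\pi_{W*}\mathcal F_{k+s},
\]
and $\pi_{W*}\mathcal F_{k+s}=\pi_{W*}(\mathfrak a^r\mathcal F_k)$ by the gluing step. This is precisely the stated condition.

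\textbf{Main obstacle.} The delicate point is the gluing step. One must ensure that the ideal produced in \cref{t13} on each chart is literally $\mathfrak b\,\cO_U$, not merely something with the same integral closure. One must also ensure that a single $k_1$ works on all of $\pi^{-1}(W)$; this uses compactness of $\pi^{-1}(\overline W)$, which holds because $\pi$ is proper. If \cref{t13}'s ideal is only determined up to integral closure, I would replace $\mathfrak a$ by $\overline{\mathfrak a}$. Since $\overline{\mathfrak a}^{\,r}\mathcal E$ and $\mathfrak a^r\mathcal E$ agree for large $k$ (the Skoda/Briançon type absorption inside the scalar ideals), the argument would go through unchanged after enlarging $k_1$.
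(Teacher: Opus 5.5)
Your proposal is correct and follows the paper's proof: you glue the chartwise upstairs periodicity (with $g_\beta=\pi^*b_\beta$ and a common $k_1$ from compactness of $\pi^{-1}(\overline W)$) into $\mathcal F_{k+s}=\mathfrak a^r\mathcal F_k$, then push forward with the transformation rule and use $\mathfrak b^r\pi_{W*}\mathcal F_k\subseteq\pi_{W*}(\mathfrak a^r\mathcal F_k)$. Your integral-closure worry does not arise, since the paper also fixes exactly these pulled-back generators; and your garbled aside should read $|b|^2e^{-s\psi}\lesssim(\sum_j|b_j|^2)^{r}(\sum_j|b_j|^2)^{-r}=O(1)$ for $b\in\mathfrak b^r$, which in fact proves the inclusion for every $k\ge0$.
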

Thus the higher-rank problem has two additional layers beyond scalar Skoda periodicity: compatibility with the Gram image, controlled by Artin--Rees and Tor, and compatibility with direct image, measured by the descent defect sheaf.

\section{Preliminary}

Let $X$ be a complex manifold, let $E\to X$ be a holomorphic vector bundle of rank $r$, and let $h$ be a singular Hermitian metric on $E$. We set
\[
 \cE(h)_x=\bigl\{F\in\cO(E)_x: |F|_h^2\in L^1_{\mathrm{loc},x}\bigr\}.
\]
For a plurisubharmonic function $\varphi$, its multiplier ideal is
\[
 \cI(\varphi)_x=\bigl\{f\in\cO_{X,x}: |f|^2e^{-\varphi}\in L^1_{\mathrm{loc},x}\bigr\}.
\]
If $H_1$ and $H_2$ are non-negative Hermitian forms, we write $H_1\asymp H_2$ locally if there is a locally uniform constant $C\geq1$ such that
\[
 C^{-1}H_2\leq H_1\leq CH_2.
\]
Comparable metrics determine the same multiplier submodule sheaf.

The following version of strong openness \cite{GZ15} and a canonical transformation rule are applied repeatedly in the text.
 
\begin{lemma}\label{lem:mixed-openness}
Let $\varphi$ and $\psi$ be plurisubharmonic functions on a neighbourhood of $x\in X$. Then there exists $\varepsilon>0$ such that
\[
\cI(\varphi)_{x}=\cI(\varphi+\varepsilon\psi)_{x}.
\]
\end{lemma}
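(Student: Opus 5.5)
The plan is to deduce the lemma from the strong openness theorem of Guan--Zhou \cite{GZ15}, in the form that mixes a second psh weight into the first. Since the statement is local at $x$, shrink to a small ball $B$ around $x$ on which both $\varphi$ and $\psi$ are plurisubharmonic. Shrinking further and adding a constant, we may assume $\psi\le 0$ on $B$. Then $e^{-(\varphi+\varepsilon\psi)}\ge e^{-\varphi}$, which gives the trivial inclusion $\cI(\varphi+\varepsilon\psi)_x\subseteq\cI(\varphi)_x$ for every $\varepsilon>0$.

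For the reverse inclusion, use that $\cI(\varphi)$ is coherent \cite{Nad90}. Choose finitely many generators $f_1,\dots,f_m$ of $\cI(\varphi)_x$, defined on a common neighbourhood. It then suffices to find a single $\varepsilon>0$ with $|f_j|^2e^{-\varphi-\varepsilon\psi}$ integrable near $x$ for every $j$, because $\cI(\varphi+\varepsilon\psi)_x$ is an ideal.

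Fix $j$. Since $|f_j|^2e^{-\varphi}\in L^1$ near $x$, strong openness \cite{GZ15} gives $p>1$ with $|f_j|^2e^{-p\varphi}\in L^1$ near $x$. Set $q=p/(p-1)$ and apply H\"older with exponents $p,q$ to the factorization
\[
|f_j|^2e^{-\varphi-\varepsilon\psi}=\bigl(|f_j|^{2/p}e^{-\varphi}\bigr)\cdot\bigl(|f_j|^{2/q}e^{-\varepsilon\psi}\bigr).
\]
The first factor to the power $p$ is $|f_j|^2e^{-p\varphi}$, which is integrable. The second factor to the power $q$ is $|f_j|^2e^{-q\varepsilon\psi}$. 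Since $f_j$ is bounded near $x$, this is integrable as soon as $e^{-q\varepsilon\psi}\in L^1$ near $x$. By Skoda's integrability theorem, that holds whenever $q\varepsilon\,\nu(\psi,x)<2$, and after shrinking $B$ it holds uniformly on a neighbourhood of $x$, since Lelong numbers are upper semicontinuous and $\nu(\psi,y)\le\nu(\psi,x)+\delta$ for $y$ close to $x$. If $\psi\equiv-\infty$ near $x$, the statement is vacuous or degenerate; we assume, as usual, that $\psi\not\equiv-\infty$ on any component. Taking the minimum of the resulting $\varepsilon$ over $j=1,\dots,m$ gives the required $\varepsilon$.

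The main obstacle is the step from $|f_j|^2e^{-\varphi}$ to $|f_j|^2e^{-p\varphi}$, which is exactly the depth of Guan--Zhou strong openness. Everything else is a H\"older and Skoda bookkeeping argument, together with the finite-generation reduction that makes $\varepsilon$ uniform.
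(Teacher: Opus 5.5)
Your argument is correct and is essentially what the paper intends. The paper gives no proof and simply attributes the lemma to Guan--Zhou strong openness; you supply the standard reduction from $\cI(\varphi)=\bigcup_{p>1}\cI(p\varphi)$ via H\"older with conjugate exponents and Skoda's integrability theorem for $e^{-q\varepsilon\psi}$, made uniform over finitely many generators of the stalk. Two minor points: finite generation of $\cI(\varphi)_x$ needs only Noetherianity of $\cO_{X,x}$, not Nadel coherence, and Skoda's theorem already gives integrability on a neighbourhood of $x$, so the upper-semicontinuity step is unnecessary.
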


\begin{lemma}\label{prop:birational-covariance}
Let $\pi:X'\to X$ be a proper modification between complex manifolds. For every singular
Hermitian metric $h$ on $E$, every quasi-plurisubharmonic function $\psi$ on $X$, and every $t\geq0$, there is a natural equality of subsheaves of $K_X\otimes E$:
\begin{equation}\label{eq:birational-covariance}
 \pi_*\bigl(K_{X'}\otimes\cE(\pi^*h\,e^{-t\pi^*\psi})\bigr)=K_X\otimes\cE(h\, e^{-t\psi}).
\end{equation}
\end{lemma}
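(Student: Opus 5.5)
The plan is the classical change-of-variables argument for multiplier ideals, applied to $E$-valued forms. Since the statement is local on $X$ and $\pi$ is proper, we may work on a small coordinate ball $U\subset X$, with $E|_U$ trivialized, and on its preimage $U'=\pi^{-1}(U)$, which is relatively compact over $U$. We fix a nowhere-vanishing holomorphic $n$-form $dz$ on $U$, where $n=\dim X$. Write $J\in\cO(U')$ for the Jacobian, so that $\pi^*dz=J\,dz'$ locally on $U'$ with respect to local coordinates there. The divisor of $J$ is the relative canonical divisor $K_{X'/X}$, and $J$ is invertible outside the exceptional set $\mathrm{Exc}(\pi)$. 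Put $h_t=h\,e^{-t\psi}$ and note that $\pi^*h_t=\pi^*h\,e^{-t\pi^*\psi}$. Sections of $K_{X'}\otimes\cE(\pi^*h_t)$ over $U'$ are then forms $G\,dz'$ with $|G|^2_{\pi^*h_t}$ locally integrable on $U'$.

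\emph{Step 1 (pull-back inclusion ``$\supseteq$'').} Let $F\,dz$ be a section of $K_X\otimes\cE(h_t)$ over $U$, and take $G=(F\circ\pi)\,J$, so that $\pi^*(F\,dz)=G\,dz'$. Because $\pi$ is biholomorphic off a measure-zero set, the change-of-variables formula gives
\[
\int_{\pi^{-1}(V)}|G|^2_{\pi^*h_t}\,dV'=\int_{V}|F|^2_{h_t}\,dV
\]
for $V\Subset U$, where $dV'=i^{n^2}dz'\wedge d\bar z'$ and $dV$ is defined likewise. The integrand $|F|^2_{h_t}\,i^{n^2}dz\wedge d\bar z$ is an intrinsic measure, and this is the reason for twisting by $K$. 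Properness ensures that $\pi^{-1}(V)$ is relatively compact, so local integrability transfers. Therefore $\pi^*(F\,dz)$ lies in $K_{X'}\otimes\cE(\pi^*h_t)$ over $U'$. It follows that $F\,dz=(\pi^{-1})^*\pi^*(F\,dz)$ lies in $\pi_*(\cdots)$ once we identify $\pi_*$ of $K_{X'}\otimes\pi^*E$ with forms on $U$. This identification is Step 2.

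\emph{Step 2 (descent ``$\subseteq$'').} Let $G\,dz'$ be a section of $K_{X'}\otimes\cE(\pi^*h_t)$ over $U'$. On $U\setminus\pi(\mathrm{Exc}(\pi))$ it defines a holomorphic $E$-valued $n$-form $F\,dz$. Here $\pi(\mathrm{Exc}(\pi))$ is an analytic set of codimension at least $2$ when $X$ is smooth, and in any case it is a proper analytic subset. The same change-of-variables identity, read backwards, shows that $|F|^2_{h_t}$ is locally integrable on $U$ minus this set. The main point is to extend $F$ holomorphically across the set. This is where we use that a singular Hermitian metric is, by definition, locally bounded below by a smooth metric, i.e.\ $h\ge c\,h_0$, and that $\psi$ is quasi-psh, hence locally bounded above, so $e^{-t\psi}\ge c'>0$. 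Consequently $|F|^2_{h_0}$ is locally $L^2$ on the complement of a proper analytic subset. The Riemann extension theorem for $L^2$ holomorphic functions then extends each component of $F$ across the set; alternatively one can use Hartogs when the codimension is at least $2$. The integrability bound also extends, because the exceptional set has measure zero. Hence $F\,dz\in K_X\otimes\cE(h_t)$ over $U$, and $\pi^*(F\,dz)=G\,dz'$.

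\emph{Step 3 (naturality and sheaf equality).} The two constructions are mutually inverse and compatible with restriction to smaller $U$. Both sides embed into $\pi_*(K_{X'}\otimes\pi^*E)=K_X\otimes E\otimes\pi_*\cO_{X'}=K_X\otimes E$, using $\pi_*\cO_{X'}=\cO_X$ for a proper modification of normal spaces. This gives the claimed equality of subsheaves of $K_X\otimes E$. The only genuinely delicate point is Step 2, namely the holomorphic extension across $\pi(\mathrm{Exc}(\pi))$. That step hinges on the lower bound of $h$ by a smooth metric and the upper bound of the quasi-psh function $\psi$. Everything else is the Jacobian bookkeeping, which is exactly absorbed by the canonical-bundle twist.
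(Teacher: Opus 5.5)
Your proposal is correct and is essentially the paper's argument. It combines the change-of-variables identity, with the Jacobian absorbed by the canonical twist, and the identification $\pi_*(K_{X'}\otimes\pi^*E)\simeq K_X\otimes E$; you prove the latter by hand via Hartogs extension across the codimension-$\geq 2$ set $\pi(\mathrm{Exc}(\pi))$, whereas the paper just quotes $\pi_*K_{X'}=K_X$ and the projection formula.

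There are two minor slips, and neither affects the conclusion. First, the chain $\pi_*(K_{X'}\otimes\pi^*E)=K_X\otimes E\otimes\pi_*\cO_{X'}$ in Step 3 is false as written, since $K_{X'}\not\cong\pi^*K_X$ in general; the correct statement is $\pi_*K_{X'}\otimes E=K_X\otimes E$, which your Step 2 effectively proves. Second, a lower bound $h\geq c\,h_0$ is not part of the general definition of a singular metric, but the Hartogs route makes it unnecessary.
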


\begin{proof}
By the projection formula and the standard identity $\pi_*K_{X'}=K_X$ for a modification
between manifolds, a local section of the left-hand side is naturally identified with an
$E$-valued canonical form on $X$. It remains to compare its local integrability on the two spaces.

Choose local coordinates $z=(z_1,\ldots,z_n)$ on $X$ and $w=(w_1,\ldots,w_n)$ on $X'$, and write
\[
 J_\pi(w)=\det\left(\frac{\partial z_i}{\partial w_j}\right).
\]
If
\[
 \sigma=F(z)\,dz_1\wedge\cdots\wedge dz_n,
\]
then on the locus where $\pi$ is biholomorphic,
\[
 \pi^*\sigma=F(\pi(w))J_\pi(w)\,dw_1\wedge\cdots\wedge dw_n.
\]
Consequently,
\[
 |\pi^*\sigma|_{\pi^*h}^2e^{-t\pi^*\psi}\,d\lambda_w=|F(\pi(w))|_h^2e^{-t\psi(\pi(w))}|J_\pi(w)|^2\,d\lambda_w.
\]
The change-of-variables formula identifies the corresponding integrals away from the
exceptional and critical loci. These are analytic sets of Lebesgue measure zero, so they do not affect local integrability. Thus $\sigma$ is locally square-integrable with respect to $h\, e^{-t\psi}$ if and only if its pull-back is locally square-integrable with respect to $\pi^*h\, e^{-t\pi^*\psi}$, proving \eqref{eq:birational-covariance}.
\end{proof}

\section{Finite plurisubharmonic Gram scalarizations}\label{sec:Gram}

\begin{definition}\label{d31}
The pair $(E,h)$ is said to admit a \emph{finite plurisubharmonic Gram scalarization} if
there is a proper modification
\[
 \pi:X'\longrightarrow X
\]
such that, on every sufficiently small coordinate ball $U\Subset X'$, there are
plurisubharmonic functions $\varphi_1,\ldots,\varphi_N$, holomorphic bundle morphisms
\[
 A_\nu:\pi^*E|_U\longrightarrow\cO_U^{m_\nu},\qquad 1\leq\nu\leq N,
\]
and a constant $C\geq1$ satisfying
\begin{equation}\label{eq:gram-comparison}
C^{-1}\sum_{\nu=1}^N e^{-\varphi_\nu}|A_\nu F|^2\le |F|_{\pi^{\ast}h}^2\le C \sum_{\nu=1}^N e^{-\varphi_\nu}|A_\nu F|^2.
\end{equation}
for every local holomorphic section $F$ of $\pi^*E$. The scalarization is \emph{exact} if either of the equality in \eqref{eq:gram-comparison} holds.
\end{definition}

\subsection{Coherence and strong openness}\label{sec:coherence}
\begin{theorem}[=\cref{t11}]\label{thm:finite-gram}
Suppose that $(E,h)$ admits a finite plurisubharmonic Gram scalarization. On a scalarization chart $U\subset X'$ one has
\begin{equation}\label{eq:gram-module-formula}
\cE(\pi^{\ast}h)|_U=\bigcap_{\nu=1}^N A_\nu^{-1}\left(\cI(\varphi_\nu)\cO_U^{m_\nu}\right).
\end{equation}
In particular, $\cE(h)$ is coherent. Moreover, for every quasi-plurisubharmonic function
$\psi$ on $X$,
\begin{equation}\label{eq:gram-strong-open}
\cE(h)=\bigcup_{\varepsilon>0}\cE(h\,e^{-\varepsilon\psi}).
\end{equation}
\end{theorem}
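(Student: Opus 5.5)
The plan is to prove the formula \eqref{eq:gram-module-formula} upstairs first, and then push it down to $X$ using \cref{prop:birational-covariance}. Fix a scalarization chart $U$ and a germ $F\in\cO(\pi^*E)_x$, $x\in U$. By \eqref{eq:gram-comparison}, $|F|^2_{\pi^*h}$ is locally integrable at $x$ if and only if $\sum_\nu e^{-\varphi_\nu}|A_\nu F|^2$ is. Each summand is non-negative, so this holds if and only if each $e^{-\varphi_\nu}|A_\nu F|^2$ is locally integrable. Writing $A_\nu F=(f_{\nu,1},\dots,f_{\nu,m_\nu})$, and using again that the terms $|f_{\nu,j}|^2e^{-\varphi_\nu}$ are non-negative, this is equivalent to $f_{\nu,j}\in\cI(\varphi_\nu)_x$ for all $j$, i.e. $A_\nu F\in\cI(\varphi_\nu)\cO_U^{m_\nu}$. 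This gives \eqref{eq:gram-module-formula}. Each $\cI(\varphi_\nu)$ is coherent by Nadel, so $\cI(\varphi_\nu)\cO_U^{m_\nu}$ is coherent. Preimages of coherent subsheaves under morphisms of coherent sheaves are coherent, being kernels of $\pi^*E\to\cO_U^{m_\nu}/\cI(\varphi_\nu)\cO_U^{m_\nu}$. Finite intersections of coherent subsheaves are coherent. Hence $\cE(\pi^*h)$ is coherent on $X'$, since the charts cover $X'$.

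Next I descend to $X$. Apply \cref{prop:birational-covariance} with $t=0$:
\[
K_X\otimes\cE(h)=\pi_*\bigl(K_{X'}\otimes\cE(\pi^*h)\bigr).
\]
Since $\pi$ is proper, Grauert's direct image theorem makes the right-hand side coherent. Tensoring with $K_X^{-1}$ then gives coherence of $\cE(h)$. I should check that the lemma really identifies $K_X\otimes\cE(h)$ as a subsheaf of $K_X\otimes E$, so that no torsion or other subtlety arises. It does, as stated.

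For strong openness, the inclusion $\supseteq$ is trivial locally, because $\psi$ is locally bounded above, so $e^{-\varepsilon\psi}\ge c>0$. For $\subseteq$, work upstairs with $t=\varepsilon$. On a chart $U$ the metric $\pi^*h\,e^{-\varepsilon\pi^*\psi}$ is comparable to $\sum_\nu e^{-(\varphi_\nu+\varepsilon\pi^*\psi)}A_\nu^*A_\nu$. Since $\pi^*\psi$ is quasi-psh, I write it locally as a psh function minus a smooth one. The smooth part is bounded and can be absorbed into the constant, so I may treat $\pi^*\psi$ as psh. The same argument as in the first paragraph then gives
\[
\cE(\pi^*h\,e^{-\varepsilon\pi^*\psi})|_U=\bigcap_\nu A_\nu^{-1}\bigl(\cI(\varphi_\nu+\varepsilon\pi^*\psi)\cO_U^{m_\nu}\bigr).
\]
At each point $x'\in X'$, \cref{lem:mixed-openness} gives $\varepsilon_\nu>0$ with $\cI(\varphi_\nu)_{x'}=\cI(\varphi_\nu+\varepsilon_\nu\pi^*\psi)_{x'}$. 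Taking $\varepsilon=\min_\nu\varepsilon_\nu$, and noting that the ideals decrease in $\varepsilon$, I get equality of stalks
\[
\cE(\pi^*h)_{x'}=\cE(\pi^*h\,e^{-\varepsilon\pi^*\psi})_{x'}.
\]

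The main obstacle is making $\varepsilon$ uniform over the fibre $\pi^{-1}(x)$, since the descent only works with a single $\varepsilon$. I handle this with Noetherian/coherence arguments. The sheaves $\cS_\varepsilon:=\cE(\pi^*h\,e^{-\varepsilon\pi^*\psi})$ form an increasing family of coherent subsheaves of $\cE(\pi^*h)$ as $\varepsilon\downarrow0$, and each is coherent by the first paragraph. The stalkwise equality $\cS_{\varepsilon}=\cE(\pi^*h)$ at $x'$ propagates to a neighbourhood of $x'$, because the quotient $\cE(\pi^*h)/\cS_\varepsilon$ is coherent and its support is closed. The fibre $\pi^{-1}(x)$ is compact, so finitely many such neighbourhoods cover it, and I take the minimal $\varepsilon$ among them. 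This yields a neighbourhood $V$ of $\pi^{-1}(x)$ on which $\cS_\varepsilon=\cE(\pi^*h)$. By properness, $V\supseteq\pi^{-1}(W)$ for some neighbourhood $W$ of $x$. Applying $\pi_*(K_{X'}\otimes\cdot)$ and \cref{prop:birational-covariance} on $W$ gives $\cE(h)|_W=\cE(h\,e^{-\varepsilon\psi})|_W$, which proves \eqref{eq:gram-strong-open}.
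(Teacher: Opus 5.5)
Your proof is correct and takes essentially the same route as the paper. The finite non-negative sum gives the upstairs formula, coherence comes from the kernel description plus Grauert's theorem and \cref{prop:birational-covariance}, and strong openness comes from \cref{lem:mixed-openness} on charts, compactness of $\pi^{-1}(x)$, and descent. The only variation is in making $\varepsilon$ uniform: you spread the vanishing of the coherent quotient $\cE(\pi^*h)/\mathcal S_\varepsilon$ to a neighbourhood of each fibre point, whereas the paper fixes one germ $F$ and spreads the integrability of the single section $\pi^*\sigma$, so your version directly gives one $\varepsilon$ for all of $\cE(h)|_W$.
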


\begin{proof}
Write
\[
A_\nu F=(g_{\nu,1},\ldots,g_{\nu,m_\nu})^T.
\]
By \eqref{eq:gram-comparison}, the condition $F\in\cE(\pi^{\ast}h)$ is equivalent to
\[
\sum_{\nu=1}^N\sum_{j=1}^{m_\nu}|g_{\nu,j}|^2e^{-\varphi_\nu}\in L^1_{\mathrm{loc}}.
\]
Since the sum is finite and all terms are non-negative, this is equivalent to
$g_{\nu,j}\in\cI(\varphi_\nu)$ for every $\nu,j$. This proves \eqref{eq:gram-module-formula}.

Set
\[
\cQ_\nu=\cO_U^{m_\nu}/\cI(\varphi_\nu)\cO_U^{m_\nu}.
\]
The morphism
\[
\Theta:\cO(\pi^{\ast}E)|_U\longrightarrow\bigoplus_{\nu=1}^N\cQ_\nu,\qquad F\longmapsto([A_1F],\ldots,[A_NF]).
\]
has kernel $\cE(\pi^*h)|_U$. Hence $\cE(\pi^*h)$ is coherent. Therefore $K_{X'}\otimes\cE(\pi^*h)$ is coherent, and so is its proper direct image. By \cref{prop:birational-covariance}, that direct image equals $K_X\otimes\cE(h)$. Since $K_X$ is invertible, $\cE(h)$ is coherent.

For strong openness, after adding a constant to $\psi$ we may assume locally that $\psi\le 0$; then the inclusion
\[
\bigcup_{\varepsilon>0}\mathcal E(h\,e^{-\varepsilon\psi})\subset\mathcal E(h)
\]
is immediate. Fix $x\in X$ and $F\in\mathcal E(h)_x$, and write $\sigma=F\,dz_1\wedge\cdots\wedge dz_n$. By \cref{prop:birational-covariance}, $\pi^*\sigma$ is locally square-integrable with respect to $\pi^*h$ near the compact fibre $\pi^{-1}(x)$.

For each $y\in\pi^{-1}(x)$, choose a scalarization chart $V_y$ and trivialize $K_{X'}$. If $F'$ is the local coefficient of $\pi^*\sigma$, then \eqref{eq:gram-module-formula} gives that every component of $A_\nu F'$ lies in $\mathcal I(\varphi_\nu)$. Locally write
$\pi^*\psi=u+\rho$, with $u$ plurisubharmonic and $\rho$ smooth. Applying \cref{lem:mixed-openness} to the finitely many components, and shrinking $V_y$ if necessary, gives $\varepsilon_y>0$ such that
\[
F'\in\mathcal E(\pi^*h\, e^{-\varepsilon_y\pi^*\psi})\quad\text{on }V_y.
\]
A finite subcover of $\pi^{-1}(x)$ and $\varepsilon=\min\varepsilon_y$ give, after shrinking a neighbourhood $W$ of $x$, one $\varepsilon>0$ valid on $\pi^{-1}(W)$. \cref{prop:birational-covariance} then yields $F\in\mathcal E(h\,e^{-\varepsilon\psi})_x$.
\end{proof}

\subsection{Quotient metric}\label{sec:quotient}

The following quotient-metric example is due to \cite{Hos17}; we include the details to make the scalarization explicit. Let $U\subset\mathbb C^n$ be a domain and let
\[
 A:\cO_U^N\longrightarrow\cO_U^r
\]
be a generically surjective holomorphic morphism. Write its columns as $A=(a_1,\ldots,a_N)$ and let
\[
 U^\circ=\{x\in U:\operatorname{rank}A(x)=r\}.
\]
Equip $\mathcal O_U^N$ with the Euclidean metric and let $h$ be the induced quotient metric on $\mathcal O_U^r$. On $U^\circ$, $h=(AA^*)^{-1}$; this defines a singular Hermitian metric on $U$. The following theorem was originally proved in \cite{Hos17} and the coherence may be viewed as a direct consequence of \cref{thm:finite-gram}.

\begin{theorem}\label{t33}
Define the plurisubharmonic function
\[
 \Phi=\log\left(\sum_{\substack{I\subset\{1,\ldots,N\}\\|I|=r}}|\Delta_I(A)|^2\right).
\]
For every subset $J=\{j_1,\ldots,j_{r-1}\}$ of cardinality $r-1$, define
\[
 L_J:\cO_U^r\longrightarrow\cO_U,\qquad L_J(F)=\det(a_{j_1},\ldots,a_{j_{r-1}},F).
\]
Then on $U$,
\begin{equation}\label{eq:quotient-scalarization}
 |F|_h^2=e^{-\Phi}\sum_{\substack{J\subset\{1,\ldots,N\}\\|J|=r-1}}|L_J(F)|^2.
\end{equation}
Thus $h$ admits an exact finite plurisubharmonic Gram scalarization and $\cE(h)$ is
coherent.
\end{theorem}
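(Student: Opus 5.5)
The identity \eqref{eq:quotient-scalarization} is pointwise linear algebra on $U^\circ$, so I would first fix $x\in U^\circ$ and work with the constant $r\times N$ matrix $A=A(x)$ of rank $r$. The quotient norm of $F\in\mathbb C^r$ is the minimal Euclidean norm of a preimage $v$ with $Av=F$. The minimizer is $v=A^*(AA^*)^{-1}F$, which gives $|F|_h^2=F^*(AA^*)^{-1}F$. So the claim reduces to
\[
F^*(AA^*)^{-1}F=\frac{\sum_{|J|=r-1}|\det(a_J,F)|^2}{\sum_{|I|=r}|\Delta_I(A)|^2}.
\]
The denominator is $\det(AA^*)$ by Cauchy--Binet. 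Hence it suffices to show
\[
\det(AA^*)\,F^*(AA^*)^{-1}F=\sum_{|J|=r-1}|\det(a_J,F)|^2 .
\]

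For this I would use the adjugate: $\det(M)\,F^*M^{-1}F=F^*\adj(M)F$ with $M=AA^*$. Alternatively, apply the bordered-determinant identity $F^*\adj(M)F=\det(M)-\det(M-FF^*)$, the rank-one determinant lemma, and then Cauchy--Binet. The cleanest route I see is the following. Consider the $r\times(N+1)$ matrix $\tilde A=(A\,|\,F)$. Apply Cauchy--Binet to $\tilde A\tilde A^*=AA^*+FF^*$:
\[
\det(AA^*+FF^*)=\sum_{|I|=r}|\Delta_I(A)|^2+\sum_{|J|=r-1}|\det(a_J,F)|^2 ,
\]
where the minors are split according to whether they use the last column. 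By the matrix determinant lemma,
\[
\det(AA^*+FF^*)=\det(AA^*)\bigl(1+F^*(AA^*)^{-1}F\bigr).
\]
Subtracting $\det(AA^*)=\sum_I|\Delta_I(A)|^2$ gives exactly the required identity. The sign and order of the column $F$ in $\det(a_J,F)$ are irrelevant because we take absolute values.

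Next I would handle the passage from $U^\circ$ to $U$. Since $A$ is generically surjective, $U\setminus U^\circ$ is a proper analytic subset and has measure zero. The singular metric $h$ is defined as $(AA^*)^{-1}$ on $U^\circ$, so \eqref{eq:quotient-scalarization} holds almost everywhere. This is all that matters for $\cE(h)$, and the right-hand side is the natural extension of $h$. I would also check that $\Phi$ is plurisubharmonic: it is the log of a sum of squared moduli of holomorphic functions, not identically zero because of generic surjectivity. Each $L_J$ is a holomorphic morphism $\cO_U^r\to\cO_U$.

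Finally, I would conclude as follows. Take $\pi=\mathrm{id}$, $N'=1$ weight $\varphi_1=\Phi$, and the single morphism $A_1=(L_J)_J:\cO_U^r\to\cO_U^{\binom{N}{r-1}}$. Then \eqref{eq:gram-comparison} holds with $C=1$, so the scalarization is exact. \cref{thm:finite-gram} then gives $\cE(h)|_U=A_1^{-1}(\cI(\Phi)\cO_U^{m})$ and hence coherence. The main obstacle is only the linear-algebra identity, and the Cauchy--Binet plus determinant-lemma step above should dispatch it.
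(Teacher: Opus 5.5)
Your proof is correct. Its outer structure matches the paper's: the minimizer $A^*(AA^*)^{-1}F$, Cauchy--Binet for $\det(AA^*)=e^{\Phi}$, extension from $U^\circ$ because the complement has measure zero, and then \cref{thm:finite-gram}. Where you differ is in proving the key identity $\det(AA^*)\,F^*(AA^*)^{-1}F=\sum_J|L_J(F)|^2$.

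The paper proves the adjugate form $F^*\adj(AA^*)F=\sum_J|L_J(F)|^2$ in four steps:
\begin{itemize}
\item it notes that both sides are invariant under unitary changes of $\mathbb C^r$ and normalizes $F=(|F|,0,\ldots,0)^T$;
\item it identifies the relevant cofactor of $AA^*$ with $\det(A'A'^*)$, where $A'$ is $A$ with its first row deleted;
\item it applies Cauchy--Binet to $A'$;
\item it checks that $L_J(F)=\pm|F|\det A'_J$.
\end{itemize}

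You instead apply Cauchy--Binet once to the bordered matrix $(A\,|\,F)$, splitting the maximal minors by whether they use the last column, and compare with the matrix determinant lemma.

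Your route is coordinate-free. It avoids the invariance check, the cofactor identification and the sign bookkeeping. It also makes visible that $e^{\Phi}\bigl(1+|F|_h^2\bigr)$ is exactly the sum of squared maximal minors of $(A\,|\,F)$.

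The paper's route gives a polynomial identity valid at every point of $U$, with no inversion of $AA^*$. Yours uses $(AA^*)^{-1}$ and so holds only on $U^\circ$. That is harmless here, since only an almost-everywhere identity is needed. The adjugate version $\det(M+FF^*)=\det M+F^*\adj(M)F$ of the determinant lemma would remove the restriction anyway.
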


\begin{proof}
For $F\in\mathbb C^r$ and $x\in U^\circ$, the minimizing vector in the definition of the
quotient norm is
\[
 x_{\min}=A^*(AA^*)^{-1}F.
\]
Hence
\begin{equation}\label{eq:quotient-inverse}
 |F|_h^2=F^*(AA^*)^{-1}F.
\end{equation}
By Cauchy--Binet,
\begin{equation}\label{eq:maximal-minors}
 \det(AA^*)=\sum_{\substack{I\subset\{1,\ldots,N\}\\|I|=r}}|\Delta_I(A)|^2=e^\Phi.
\end{equation}
We claim that
\begin{equation}\label{eq:adjugate-minors}
 F^*\adj(AA^*)F=\sum_{\substack{J\subset\{1,\ldots,N\}\\|J|=r-1}}|L_J(F)|^2.
\end{equation}
Both sides are invariant under unitary changes of coordinates in $\mathbb C^r$. We may
therefore assume $F=(|F|,0,\ldots,0)^T$. Let $A'$ be obtained from $A$ by deleting its
first row. The upper-left entry of $\adj(AA^*)$ equals $\det(A'A'^*)$. Applying Cauchy--Binet again gives
\[
 \det(A'A'^*)=\sum_{|J|=r-1}|\det A'_J|^2.
\]
In the chosen coordinates, $L_J(F)=\pm|F|\det A'_J$, proving \eqref{eq:adjugate-minors}. Combining \eqref{eq:quotient-inverse}--\eqref{eq:adjugate-minors} with $(AA^*)^{-1}=\adj(AA^*)/\det(AA^*)$ proves \eqref{eq:quotient-scalarization} on $U^\circ$. Since the complement of $U^\circ$ is analytic and has measure zero, the almost-everywhere identity is sufficient for the singular metric and its multiplier submodule. Coherence follows from \cref{thm:finite-gram}.
\end{proof}

\begin{corollary}\label{c41}
With the notation above,
\[
\cE(h)=\bigcap_{\substack{J\subset\{1,\ldots,N\}\\ |J|=r-1}}L_J^{-1}\bigl(\cI(\Phi)\bigr).
\]
For every quasi-plurisubharmonic function $\psi$ on $U$,
\[
 \cE(h)=\bigcup_{\varepsilon>0}\cE(h\, e^{-\varepsilon\psi}).
\]
\end{corollary}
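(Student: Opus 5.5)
The plan is to specialize \cref{thm:finite-gram} to the exact scalarization produced in \cref{t33}, where $X'=X=U$ and $\pi=\mathrm{id}$. This choice of modification is admissible in \cref{d31}, and with it the transformation rule \cref{prop:birational-covariance} becomes a tautology. Once the identification is set up, both assertions of the corollary should be direct readings of \eqref{eq:gram-module-formula} and \eqref{eq:gram-strong-open}.

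\textbf{First assertion.} By \eqref{eq:quotient-scalarization}, on all of $U$ we have
\[
|F|_h^2=e^{-\Phi}\sum_{|J|=r-1}|L_J(F)|^2 .
\]
This is a Gram scalarization with the following data:
\begin{itemize}
\item a single plurisubharmonic weight, so $N=1$ and $\varphi_1=\Phi$;
\item the morphism $A_1=(L_J)_{|J|=r-1}:\cO_U^r\to\cO_U^{m_1}$, where $m_1=\binom{N}{r-1}$;
\item the comparison constant $C=1$, since the identity is exact.
\end{itemize}
The data are global on $U$, so every coordinate ball in $U$ is a scalarization chart. Formula \eqref{eq:gram-module-formula} then gives
\[
\cE(h)=A_1^{-1}\bigl(\cI(\Phi)\cO_U^{m_1}\bigr).
\]
A section lies in the preimage of $\cI(\Phi)\cO_U^{m_1}$ exactly when every component lies in $\cI(\Phi)$. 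Hence the preimage is $\bigcap_J L_J^{-1}(\cI(\Phi))$, which is the claimed formula.

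One small check is needed. Each $L_J$ is an $\cO_U$-linear morphism $\cO_U^r\to\cO_U$, since it is linear in $F$ with holomorphic coefficients. So each $L_J$ is a holomorphic bundle morphism as \cref{d31} requires.

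\textbf{Second assertion.} This is \eqref{eq:gram-strong-open} applied with $X=U$. Alternatively, it can be redone directly to avoid the modification language. If $F\in\cE(h)_x$, then each $L_J(F)$ lies in $\cI(\Phi)_x$. Apply \cref{lem:mixed-openness} to $\Phi$ and the plurisubharmonic part $u$ of $\psi=u+\rho$, where $\rho$ is smooth and hence locally bounded. This gives $\varepsilon_J>0$ with $L_J(F)\in\cI(\Phi+\varepsilon_J\psi)_x$. Taking $\varepsilon=\min_J\varepsilon_J$ over the finitely many $J$ works because the ideals $\cI(\Phi+\varepsilon\psi)$ increase as $\varepsilon$ decreases, once $\psi\le0$ locally.

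Next, multiply \eqref{eq:quotient-scalarization} by $e^{-\varepsilon\psi}$. This gives an exact scalarization of $h\,e^{-\varepsilon\psi}$ with weight $\Phi+\varepsilon\psi$. Here $\Phi+\varepsilon\psi$ is quasi-plurisubharmonic, and it can be made plurisubharmonic up to a bounded error. That error does not change integrability, so $F\in\cE(h\,e^{-\varepsilon\psi})_x$. The reverse inclusion is immediate from $\psi\le 0$.

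\textbf{Main obstacle.} The only delicate point is that \eqref{eq:quotient-scalarization} was proved only on $U^\circ$. Since $U\setminus U^\circ$ is an analytic subset of measure zero, the identity holds almost everywhere, and local integrability is unaffected. I therefore expect no genuine obstacle beyond this measure-zero remark and the quasi-plurisubharmonic adjustment in the openness step.
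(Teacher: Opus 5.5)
Your proposal is correct and follows essentially the paper's route: the intersection formula is read off from the exact identity \eqref{eq:quotient-scalarization}, which has finitely many non-negative terms, and strong openness is \eqref{eq:gram-strong-open} of \cref{thm:finite-gram} with $\pi=\mathrm{id}$; your direct openness argument simply unwinds that proof in this special case. The only blemish is cosmetic: you use $N$ both for the number of weights ($N=1$) and for the number of columns of $A$ (in $m_1=\binom{N}{r-1}$).
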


\begin{proof}
The first assertion follows directly from \eqref{eq:quotient-scalarization}, because the sum has finitely many non-negative terms. The second follows from \cref{thm:finite-gram}. 
\end{proof}

More generally, let
\[
q:(V,g)\longrightarrow E
\]
be a generically surjective morphism of holomorphic bundles, where $g$ is a smooth Hermitian metric. In local frames, if $G$ is the matrix of $g$ and $A$ is the matrix of $q$, then the quotient metric is
\[
 h_{q,g}=(AG^{-1}A^*)^{-1}.
\]
On every relatively compact coordinate neighbourhood there exist constants $0<c\leq C$ such that $cI\leq G^{-1}\leq CI$, and hence
\[
 C^{-1}(AA^*)^{-1}\leq h_{q,g}\leq c^{-1}(AA^*)^{-1}
\]
on the full-rank locus. Therefore
\[
 |F|_{h_{q,g}}^2\asymp e^{-\Phi}\sum_{|J|=r-1}|L_J(F)|^2.
\]
Thus quotient metrics of smoothly metrized bundles also admit finite plurisubharmonic Gram scalarizations, although in general only up to local uniform equivalence.

\begin{remark}
If the source metric is singular, $G^{-1}$ need not be locally comparable with a smooth
positive-definite matrix.  Thus the preceding reduction requires additional hypotheses.
\end{remark}

\subsection{Failure of finite plurisubharmonic Gram scalarization}\label{sec:failure}

Finite plurisubharmonic Gram scalarizability does not imply Griffiths semi-positivity. We now show that the converse also fails in general. 

For a positive Hermitian matrix $H$, write
\[
 \widehat H=\frac{H}{\tr H}.
\]

\begin{proposition}\label{p36}
Let $z$ be the coordinate on a disk $\Delta_\varepsilon\subset\mathbb C$ centred at the origin, and let
\[
G(z)=\sum_{\nu=1}^{N}\rho_\nu(z)A_\nu(z)^*A_\nu(z),
\]
where $\rho_\nu(z)>0$ and the $A_\nu$ are holomorphic matrices on $\Delta_\varepsilon$. If $z_j\to0$ and $\widehat G(z_j)\to P$, where $P$ is a rank-one projection, then $P$ belongs to a finite set determined by the matrices $A_\nu$.
\end{proposition}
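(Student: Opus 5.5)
Since the $A_\nu$ are holomorphic on a one-dimensional disc, the plan is to expand each of them to leading order at the origin and read off the possible limit directions from finitely many vectors attached to them. Fix $\nu$ with $A_\nu\not\equiv0$ and consider the $m_\nu\times r$ matrix $A_\nu(z)$. On a small disc it factors through a Smith-type normal form: there are holomorphic invertible matrices $P_\nu(z)$ and $Q_\nu(z)$ and integers $0\le k_{\nu,1}\le\cdots\le k_{\nu,\ell_\nu}$ such that
\[
A_\nu(z)=P_\nu(z)\,\mathrm{diag}\bigl(z^{k_{\nu,1}},\ldots,z^{k_{\nu,\ell_\nu}},0,\ldots\bigr)\,Q_\nu(z).
\]
This holds because $\cO_{\mathbb C,0}$ is a PID. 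For $v\in\mathbb C^r$ it gives
\[
|A_\nu(z)v|^2\asymp\sum_i |z|^{2k_{\nu,i}}\,|\langle q_{\nu,i}(z),v\rangle|^2,
\]
where $q_{\nu,i}(z)^*$ are the rows of $Q_\nu(z)$, with constants uniform near $0$. Hence, up to uniform comparability,
\[
G(z)\asymp\sum_{\nu,i}\rho_\nu(z)|z|^{2k_{\nu,i}}\,q_{\nu,i}(z)q_{\nu,i}(z)^*,
\]
which is a finite sum of positive multiples of rank-one projections onto the lines spanned by $q_{\nu,i}(z)$. The $q_{\nu,i}(z)$ vary holomorphically, and their values at $0$ form a finite set of vectors $q_{\nu,i}(0)$ determined by the $A_\nu$.

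Comparability alone does not pin down a rank-one limit of $\widehat G$, so the next step is to work with $G$ exactly as the finite sum $\sum_\nu\rho_\nu A_\nu^*A_\nu$. Write $M_\nu=A_\nu^*A_\nu$ and expand $M_\nu(z)=\sum_{a,b}z^{a}\bar z^{b}M_{\nu,ab}$. The key step is a lemma: if $\widehat G(z_j)\to P=ee^*$, then for every unit vector $w\perp e$ one has $w^*G(z_j)w/\tr G(z_j)\to0$. Since each $\rho_\nu(z_j)M_\nu(z_j)$ is positive semidefinite and bounded above by $G(z_j)$, this forces
\[
\frac{\rho_\nu(z_j)\,w^*M_\nu(z_j)w}{\tr G(z_j)}\to0\qquad\text{for every }\nu.
\]
Pass to a subsequence and choose a $\nu$ realizing a fixed positive proportion of the trace; this is possible because there are only $N$ terms. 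Then $\widehat{M_\nu}(z_j)\to P$ as well, so $P$ is a limit point of $\widehat{A_\nu^*A_\nu}(z)$ as $z\to0$ for a single $\nu$.

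It then remains to show that, for a single holomorphic matrix $A$, the set of rank-one limits of $\widehat{A^*A}(z)$ at $0$ is finite. This is the step I expect to be the main obstacle. Write $A(z)=z^kB(z)$ with $B(0)\ne0$, so that $\widehat{A^*A}=\widehat{B^*B}$, which is continuous at $0$ with limit $\widehat{B(0)^*B(0)}$. The limit is therefore unique, and $P$ can equal it only when $B(0)$ has rank one, in which case $P$ is the projection onto $\overline{\ker B(0)}^{\perp}$. The resulting finite set consists of at most one projection per $\nu$, namely the normalized $B_\nu(0)^*B_\nu(0)$ when it has rank one. Two points need care. The first is that the dominant-term selection above must be applied to the trace-normalized terms, whose ratios $\rho_\nu\tr M_\nu/\tr G$ lie in $[0,1]$ and sum to $1$, so some $\nu$ keeps a ratio bounded below along a subsequence. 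The second is to justify that $\rho_\nu M_\nu/\tr G\to cP$ with $c>0$ implies $\widehat{M_\nu}\to P$, which follows by dividing by the trace ratio.
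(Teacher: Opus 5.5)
Your argument is correct and uses the same ingredients as the paper's proof: the leading-order factorization $A_\nu=z^{m_\nu}B_\nu(z)$ with $B_\nu(0)\neq0$, a subsequence on the trace proportions, and the kernel argument showing that a rank-one projection is not a non-trivial convex combination. The only difference is that you apply the kernel argument through $\rho_\nu A_\nu^*A_\nu\le G$ before identifying the limit, whereas the paper applies it after identifying the limit as $\sum_\nu\alpha_\nu C_\nu/\tr C_\nu$.

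The Smith-form paragraph is unnecessary. Your remark there that comparability cannot pin down a rank-one limit is false: rank-one limits do transfer under uniform comparability, which is \cref{lem:comparable-limit}. So that route could also have been completed.
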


\begin{proof}
Let $m_\nu$ be the minimal vanishing order at $0$ among the entries of $A_\nu$. Then
\[
 A_\nu(z)=z^{m_\nu}(B_\nu+O(z)),\qquad B_\nu\neq0,
\]
and
\[
 A_\nu(z)^*A_\nu(z)=|z|^{2m_\nu}(C_\nu+o(1))\qquad\text{with } C_\nu=B_\nu^*B_\nu.
\]
Set
\[
 t_{\nu,j}=\rho_\nu(z_j)|z_j|^{2m_\nu}\tr C_\nu.
\]
After passing to a subsequence,
\[
 \frac{t_{\nu,j}}{\sum_\mu t_{\mu,j}}\longrightarrow\alpha_\nu\qquad\text{with } \alpha_\nu\geq0\qquad\text{and}\qquad\sum_\nu\alpha_\nu=1.
\]
The normalized limit is therefore
\[
 P=\sum_\nu\alpha_\nu\frac{C_\nu}{\tr C_\nu}.
\]
If $v\in\ker P$, then
\[
 0=\langle Pv,v\rangle=\sum_\nu\alpha_\nu\frac{\langle C_\nu v,v\rangle}{\tr C_\nu}.
\]
All summands are non-negative, so every $C_\nu$ with $\alpha_\nu>0$ vanishes on
$\ker P$. Its range is therefore contained in the one-dimensional space $\operatorname{im}\,P$, and $C_\nu/\tr C_\nu=P$. Hence $P$ is one of finitely many normalized matrices $C_\nu/\tr C_\nu$.
\end{proof}

\begin{lemma}\label{lem:comparable-limit}
Suppose $H_j,G_j$ are positive Hermitian matrices and $C^{-1}G_j\leq H_j\leq CG_j$ for a fixed $C\geq1$. If $\widehat H_j\to P$ for a rank-one projection $P$, then $\widehat G_j\to P$.
\end{lemma}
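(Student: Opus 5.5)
The plan is to show that $\widehat G_j$ concentrates on the line $\operatorname{im}P$. Write $P=uu^*$ with $|u|=1$. A positive semidefinite matrix $M$ with $\tr M=1$ is close to $P$ exactly when $\langle Mu,u\rangle$ is close to $1$, or equivalently when $\tr(M(I-P))$ is close to $0$. Indeed, $\tr M(I-P)=1-\langle Mu,u\rangle$, and if this quantity is small then the off-diagonal blocks are small by the Cauchy--Schwarz inequality for positive semidefinite matrices: $|\langle Mu,v\rangle|^2\le\langle Mu,u\rangle\langle Mv,v\rangle$. So the whole proof reduces to showing
\[
\frac{\tr\bigl(G_j(I-P)\bigr)}{\tr G_j}\longrightarrow0.
\]

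The main step uses the comparison $C^{-1}G_j\le H_j\le CG_j$. Since $I-P$ is an orthogonal projection, the inequality $G_j\le CH_j$ gives
\[
\tr\bigl(G_j(I-P)\bigr)=\tr\bigl((I-P)G_j(I-P)\bigr)\le C\,\tr\bigl((I-P)H_j(I-P)\bigr)=C\,\tr\bigl(H_j(I-P)\bigr).
\]
Taking traces in $C^{-1}H_j\le G_j$ gives $\tr G_j\ge C^{-1}\tr H_j$. Dividing the first estimate by the second yields
\[
\frac{\tr(G_j(I-P))}{\tr G_j}\le C^2\,\tr\bigl(\widehat H_j(I-P)\bigr)\longrightarrow C^2\tr\bigl(P(I-P)\bigr)=0.
\]

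It remains to conclude that $\widehat G_j\to P$. There are two ways to finish:
\begin{itemize}
\item Pass to an arbitrary convergent subsequence of the bounded sequence $\widehat G_j$ (these are positive semidefinite with trace $1$). Any limit $Q$ is positive semidefinite with $\tr Q=1$ and $\tr Q(I-P)=0$. Hence $(I-P)Q(I-P)=0$, so $Q^{1/2}(I-P)=0$ and $Q=PQP=P$, since $\tr Q=1$. Because every subsequential limit equals $P$, the whole sequence converges to $P$.
\item Alternatively, one can argue directly with the block estimate from the first paragraph.
\end{itemize}

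The only mildly delicate point is this final identification, namely that vanishing of $\tr Q(I-P)$ together with $\tr Q=1$ forces $Q=P$. That holds precisely because $P$ has rank one; for higher-rank limits the conclusion would fail. Everything else is a routine manipulation of the Loewner order.
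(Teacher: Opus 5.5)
Your proof is correct and is essentially the paper's argument. The paper bounds $\langle G_jv,v\rangle/\tr G_j\le C^2\langle H_jv,v\rangle/\tr H_j\to 0$ for each $v\in\ker P$; your $\tr(G_j(I-P))$ is this quantity summed over an orthonormal basis of $\ker P$. It then concludes, as you do, that every cluster point of $\widehat G_j$ is a trace-one positive semidefinite matrix annihilating $\ker P$, hence equal to $P$. You only make explicit the compactness step and the identification $Q=PQP=P$, which the paper leaves implicit.
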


\begin{proof}
The trace inequalities give
$C^{-1}\tr G_j\leq\tr H_j\leq C\tr G_j$. For $v\in\ker P$,
\[
0\leq\frac{\langle G_jv,v\rangle}{\tr G_j}\leq C^2\frac{\langle H_jv,v\rangle}{\tr H_j}\longrightarrow0.
\]
Every cluster point of $\widehat G_j$ therefore has kernel containing $\ker P$; since it is non-negative of trace one, it must equal $P$.
\end{proof}

Now let $\Delta=\{z\in\mathbb C:|z|<1\}$ and $E=\Delta\times\mathbb C^2$. Choose $\theta\in\mathbb R$ with $\theta/\pi\notin\mathbb Q$, and set
\[
u_n=\begin{pmatrix}\cos(n\theta)\\ \sin(n\theta)\end{pmatrix},\qquad P_n=u_nu_n^*.
\]
Define a metric on $E^*$ by
\begin{equation}\label{eq:k-counterexample}
 k(z)=|z|^2I_2+\sum_{n=1}^{\infty}e^{-4^n}|z|^{2^{1-n}}P_n.
\end{equation}
The series converges uniformly on $\Delta$. It is positive definite for $z\neq0$; define
$h=k^{-1}$ almost everywhere.

\begin{proposition}\label{prop:counterexample-positive}
The metric $k$ is Griffiths semi-negative; consequently, $h$ is Griffiths semi-positive.
\end{proposition}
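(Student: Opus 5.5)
We show that $k$ is Griffiths semi-negative through the standard criterion for metrics on $E^*$: $k$ is Griffiths semi-negative precisely when, for every local holomorphic section $s$ of $E^*$ (equivalently, every constant vector $v\in\mathbb C^2$ together with its holomorphic perturbations), the function $\log|s|_k^2$, or at least $|s|_k^2$, is plurisubharmonic. The main tool is that $k$ is a convergent sum of terms of the form $|f(z)|^{2c}Q$ with $Q\ge0$ a constant Hermitian matrix and $c>0$. For a holomorphic section $s$, each summand $|s|_Q^2|z|^{2c}=|z|^{2c}\,\langle Q s,s\rangle$ is well behaved: writing $Q=\sum_i w_iw_i^*$, this equals $\sum_i |z^{c}\langle s,w_i\rangle|^2$ locally. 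Each such term is $\log$-plurisubharmonic, since $\log|z|^{2c}+\log|\langle s,w_i\rangle|^2$ is subharmonic, and hence plurisubharmonic. Sums of log-plurisubharmonic functions are log-plurisubharmonic, because the class is closed under addition. Therefore $\log|s|_k^2$ is subharmonic for every holomorphic $s$ on $\Delta$, and this is the definition of Griffiths semi-negativity for singular metrics on $E^*$ (as used in \cite{Hos17}, \cite{Ina22}). It follows that $h=k^{-1}$ is Griffiths semi-positive by the usual duality.

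The argument proceeds in the following order. First, we verify the convergence and the regularity of the series. The $n$-th term has operator norm $e^{-4^n}|z|^{2^{1-n}}\le e^{-4^n}$, so the series converges uniformly and $k$ is continuous, with $k\ge|z|^2I_2$ positive definite for $z\ne0$. Second, for a fixed holomorphic $s$ we write
\[
|s|_k^2=|z|^2|s|^2+\sum_{n\ge1}e^{-4^n}|z|^{2^{1-n}}|\langle s,u_n\rangle|^2 .
\]
Each summand is $|g_n|^{2}$-type, with $\log$ equal to $2^{1-n}\log|z|+2\log|\langle s,u_n\rangle|-4^n$, which is subharmonic; the first term is treated in the same way. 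Third, we invoke closure of log-subharmonicity under finite sums and under locally uniform limits. Finite partial sums are log-subharmonic, and the decreasing-free uniform convergence passes to the limit because $\log$ of a locally uniform limit of log-subharmonic functions, which are bounded above, is the limit in $L^1_{\mathrm{loc}}$ of an upper-bounded family. Equivalently, the uniform limit of the subharmonic functions $(\text{partial sum})^{\alpha}$ for every $\alpha>0$ stays subharmonic.

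The only delicate point is this last limiting step, which relies on the characterization that $f\ge0$ is log-subharmonic iff $f|e^{p}|$ is subharmonic for all holomorphic polynomials $p$ (or $f e^{\mathrm{Re}(az)}$ subharmonic for all $a$). That characterization is manifestly stable under uniform limits, so we will use it in place of arguing directly with $\log$. We also need to handle $z=0$, where $k(0)=0$ and $|s|_k^2$ vanishes identically. This is harmless because the value $-\infty$ of $\log$ is allowed for subharmonic functions. Alternatively, we use the convention that negativity for singular metrics on $E^*$ only requires plurisubharmonicity of $\log|s|_k^2$ off a measure-zero set, with the upper semicontinuous extension. Finally, we dualize: Griffiths semi-negativity of $k$ on $E^*$ is the definition of Griffiths semi-positivity of $h=k^{-1}$ on $E$.
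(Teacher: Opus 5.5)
Your proposal is correct and is essentially the paper's argument: you use the same decomposition of $|s|_k^2$ into $|z|^2|s|^2$ plus the summands $e^{-4^n}|z|^{2^{1-n}}|\langle u_n,s\rangle|^2$, each an exponential of a subharmonic function, then closure of log-subharmonicity under finite sums, then passage to the limit. The only difference is the limit step: the paper observes that the logarithms of the partial sums increase to $\log|s|_k^2$, which is upper semicontinuous because the series converges locally uniformly, and applies monotone convergence for subharmonic functions, whereas you use that the criterion ``$f|e^{p}|$ subharmonic for all polynomials $p$'' is stable under locally uniform limits. Both routes are valid, but the monotonicity you set aside (``decreasing-free'') is in fact available and gives the shorter argument.
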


\begin{proof}
For a holomorphic section $s=(s_1,s_2)$ of $E^*$,
\[
|s|_k^2=|z|^2(|s_1|^2+|s_2|^2)+\sum_{n=1}^{\infty}e^{-4^n}|z|^{2^{1-n}}|\langle u_n,s\rangle|^2.
\]
Each summand is the exponential of a subharmonic function (with the value $0$ allowed at
its zero set). Hence the logarithm of every finite partial sum is subharmonic. These
logarithms increase to $\log|s|_k^2$. The underlying sums converge locally uniformly, so
the limit is upper semi-continuous and locally bounded above away from the common zero set. The monotone convergence theorem for subharmonic functions therefore gives $\log|s|_k^2\in\PSH(\Delta)$. This is precisely Griffiths semi-negativity of $k$.
\end{proof}

Put $x=-\log |z|^2$. The coefficient of $P_n$ in \eqref{eq:k-counterexample} is
\[
 w_n(x)=\exp(-4^n-2^{-n}x).
\]
Choose $x_n=3\cdot2^{3n}$ and $z_n$ with $-\log|z_n|^2=x_n$. Then
\[
 w_n(x_n)=e^{-4^{n+1}},
\]
and a direct comparison of exponents gives
\[
 \frac{|z_n|^2+\sum_{m\neq n}w_m(x_n)}{w_n(x_n)}\longrightarrow0.
\]
Thus
\[
 \widehat k(z_n)-P_n\longrightarrow0.
\]
Because $\theta/\pi$ is irrational, the lines $[\mathbb C u_n]$ are dense in the real locus $\mathbb{RP}^1\subset\mathbb{CP}^1$. Hence the normalized matrices $\widehat k(z_n)$ have infinitely many rank-one cluster points. For a $2\times2$ positive matrix,
\[
 \frac{k^{-1}}{\tr(k^{-1})}=\frac{\adj k}{\tr(\adj k)}.
\]
If $P$ is a rank-one projection, then $\adj P=I-P$. Therefore the normalized matrices
$\widehat h(z_n)$ also have infinitely many rank-one cluster points.

\begin{theorem}[=\cref{t12}]\label{thm:no-finite-scalarization}
There exists a Griffiths semi-positive singular Hermitian metric on the trivial rank-two
bundle over $\Delta$ which admits no finite plurisubharmonic Gram scalarization in any
neighbourhood of the origin.
\end{theorem}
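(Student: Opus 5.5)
We take $h=k^{-1}$ with $k$ as in \eqref{eq:k-counterexample}. By \cref{prop:counterexample-positive}, $h$ is Griffiths semi-positive, so only the non-existence of a scalarization has to be shown. We argue by contradiction: suppose that on some neighbourhood of $0$ the metric $h$ admits a finite plurisubharmonic Gram scalarization.

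The first step disposes of the modification. In dimension one a proper modification $\pi:X'\to\Delta$ between manifolds is a biholomorphism, since any exceptional set is a discrete set of points whose fibres are finite and hence trivial. So we may assume $\pi=\mathrm{id}$. We then obtain a small disc $\Delta_\varepsilon$ around $0$, plurisubharmonic $\varphi_\nu$, holomorphic matrices $A_\nu$ and a constant $C$ with
\[
C^{-1}G(z)\le h(z)\le C\,G(z),\qquad G(z)=\sum_{\nu=1}^N e^{-\varphi_\nu(z)}A_\nu(z)^*A_\nu(z),
\]
as Hermitian forms. Since \eqref{eq:gram-comparison} only involves holomorphic sections, we pass to pointwise matrix inequalities by evaluating on constant sections $F\in\mathbb C^2$; these span every fibre. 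The comparison then holds almost everywhere, in particular at a.e.\ $z$ off the polar sets $\{\varphi_\nu=-\infty\}$, where $e^{-\varphi_\nu}\in(0,\infty)$.

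At a.e.\ such point $\rho_\nu=e^{-\varphi_\nu}$ is positive and finite, so \cref{p36} applies with these $\rho_\nu$ along suitable sequences. The set where $\varphi_\nu=-\infty$ is polar, hence of measure zero. We therefore perturb each point $z_n$ slightly inside a small circle, still with $\widehat k(z_n)-P_n\to0$: the estimate on $w_m(x_n)$ depends only on $|z|$, and $k$ is continuous away from $0$. We also arrange that $h(z_n)$ is positive definite and that the comparison holds at $z_n$. Next we fix infinitely many distinct rank-one cluster points $Q$ of $\widehat h(z_n)$; these exist by the discussion preceding the theorem, using $\adj P=I-P$. For each $Q$ we take a subsequence $z_j\to0$ with $\widehat h(z_j)\to Q$. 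Applying \cref{lem:comparable-limit} with $H_j=h(z_j)$ and $G_j=G(z_j)$ gives $\widehat G(z_j)\to Q$.

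If some $G(z_j)$ is only semi-definite, we replace $G$ by $G+\delta_j I$ with $\delta_j$ small. Alternatively we note that $h(z_j)$ positive definite forces $G(z_j)$ to be positive definite via the comparison, so this issue does not arise. \cref{p36} then says that $Q$ lies in a finite set determined by the $A_\nu$, contradicting the infinitude of the cluster points. The main obstacle is the bookkeeping in this last paragraph: choosing the $z_n$ outside the $-\infty$ loci of the $\varphi_\nu$ while keeping the asymptotics $\widehat k(z_n)\to P_n$. Rotating $z_n$ along the circle $|z|=|z_n|$ handles this, because a polar set meets almost every circle in a set of zero length.
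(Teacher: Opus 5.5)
Your proposal is correct and follows the same route as the paper: reduce to the disc because proper modifications of curves are isomorphisms, transfer the infinitely many rank-one cluster points of $\widehat h(z_n)$ to the Gram sum via \cref{lem:comparable-limit}, and contradict the finiteness in \cref{p36}. Your extra bookkeeping fills in a detail the paper leaves implicit, namely choosing the sample points off the polar sets of the $\varphi_\nu$ and inside the full-measure set where the pointwise comparison holds, using that $k$ depends only on $|z|$ and is continuous off $0$. Note that rotating along the exact circle $|z|=|z_n|$ only avoids the polar sets; it is your small-disc perturbation, together with continuity of $\widehat k$, that guarantees the almost-everywhere comparison at the chosen points.
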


\begin{proof}
For the metric $h$ constructed above, $\widehat h(z_n)$ has infinitely many rank-one cluster points. Since every proper modification of a non-singular complex curve is an isomorphism, a finite scalarization in the sense of \cref{d31} would already give a local scalarization on the disk. If $h$ admitted such a scalarization, its Gram sum $G=\sum e^{-\varphi_\nu}A_\nu^*A_\nu$ would be uniformly comparable with $h$. By \cref{lem:comparable-limit}, every rank-one cluster point of $\widehat h(z_n)$ would be the corresponding cluster point of $\widehat G(z_n)$. This contradicts \cref{p36}, which permits only finitely many such limits.
\end{proof}

\section{Jumping numbers}
\label{sec:jumping-numbers}

In this section, we study the jumping behaviour of multiplier submodule sheaves arising from a finite plurisubharmonic Gram scalarization. The resulting module jumping numbers are controlled by finitely many scalar multiplier ideals.

\subsection{Setup and scalar upper bounds}

Suppose that $(E,h)$ admits a finite plurisubharmonic Gram scalarization. Let $\psi$ be a quasi-plurisubharmonic function on $X$. After adding a constant, we may assume locally that $\psi\leq0$. For $t\geq0$, set $h_t=h\,e^{-t\psi}$. Then $\cE(h_t)$ is decreasing in $t$. On a scalarization chart $U\subset X'$, one has 
\begin{equation}
\label{eq:module-family}
\mathcal E(\pi^{\ast}h_{t})|_{U}=\bigcap_{\nu=1}^{N}A_{\nu}^{-1}\left(\mathcal I(\varphi_{\nu}+t\pi^{\ast}\psi)\mathcal O_{U}^{m_{\nu}}\right).
\end{equation}

\begin{definition}
\label{def:module-jumping-number}
Fix $x\in X$. A real number $c>0$ is a \emph{jumping number} of the family $\cE(h_t)$ at $x$ if
\[
 \cE(h_c)_x\subsetneq\cE(h_{c-\varepsilon})_x
\]
for every sufficiently small $\varepsilon>0$. The set of such numbers is denoted $\Jump_x(h;\psi)$.
\end{definition}

For $x'\in X'$ and a scalarization weight $\varphi_\nu$ on the birational model $X'$, denote by $\Jump_{x'}(\varphi_\nu;\pi^*\psi)$ the jumping set of $t\mapsto\cI(\varphi_\nu+t\pi^*\psi)_{x'}$.

\begin{proposition}
\label{prop:jumping-contained-scalar}
For every $x\in X$,
\[
\Jump_x(h;\psi)\subseteq\bigcup_{x'\in\pi^{-1}(x)}\ \bigcup_\nu\Jump_{x'}(\varphi_\nu;\pi^*\psi),
\]
where the right-hand side is formed using a finite collection of scalarization charts covering $\pi^{-1}(x)$ and the scalar weights occurring on those charts.
\end{proposition}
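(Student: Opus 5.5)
We argue by contraposition. Suppose $c>0$ is not in the right-hand side, and show that $c\notin\Jump_x(h;\psi)$. Fix the finite collection of scalarization charts $V_1,\ldots,V_p$ covering $\pi^{-1}(x)$, with their weights $\varphi_\nu^{(i)}$ and morphisms $A_\nu^{(i)}$. Since the fibre is compact, we first shrink to finitely many base points $x'_{i}\in\pi^{-1}(x)$ and smaller neighbourhoods $V'_i\Subset V_i$ still covering the fibre. The scalar families $t\mapsto\cI(\varphi^{(i)}_\nu+t\pi^*\psi)$ are decreasing in $t$ (we may take $\psi\le0$). The hypothesis $c\notin\Jump_{x'}(\varphi_\nu;\pi^*\psi)$ for all relevant $x'$ and $\nu$ means that for each such germ there are arbitrarily small $\varepsilon$ with $\cI(\varphi_\nu+c\pi^*\psi)_{x'}=\cI(\varphi_\nu+(c-\varepsilon)\pi^*\psi)_{x'}$. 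By monotonicity, this equality then holds for all smaller $\varepsilon$ as well, so there is an $\varepsilon_{x',\nu}>0$ below which the stalks agree.

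The key step, and the main obstacle, is to pass from stalkwise equality at each point of the fibre to equality on a neighbourhood of the whole fibre with one uniform $\varepsilon$. The union over $x'\in\pi^{-1}(x)$ is a priori infinite. I would handle this by coherence. For fixed $i,\nu$ and $\varepsilon$, the sheaves $\cI(\varphi+c\pi^*\psi)\subseteq\cI(\varphi+(c-\varepsilon)\pi^*\psi)$ are coherent (Nadel), so the set where the stalks differ is the support of a coherent quotient, which is closed. At a given $x'$, equality of stalks holds for all $\varepsilon<\varepsilon_{x',\nu}$, so it holds on an open neighbourhood of $x'$ for one fixed such $\varepsilon$.

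Uniformity in $\varepsilon$ on that neighbourhood is the subtle point. Equality for the single value $\varepsilon_0=\varepsilon_{x',\nu}/2$ at all points near $x'$ gives, by monotonicity, equality for every $\varepsilon\le\varepsilon_0$ at those points. This is because
\[
\cI(\varphi+c\pi^*\psi)\subseteq\cI(\varphi+(c-\varepsilon)\pi^*\psi)\subseteq\cI(\varphi+(c-\varepsilon_0)\pi^*\psi).
\]
So one fixed $\varepsilon_0$ suffices in a neighbourhood. Compactness of $\pi^{-1}(x)$ and finiteness of $\nu$ then give a common $\varepsilon>0$ and an open set $\pi^{-1}(W)$, for $W\ni x$, on which all the scalar ideals for $c$ and $c-\varepsilon'$ coincide for every $\varepsilon'\le\varepsilon$.

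Given this, formula \eqref{eq:module-family} shows that on each chart $V'_i\cap\pi^{-1}(W)$ we have $\cE(\pi^*h_c)=\cE(\pi^*h_{c-\varepsilon'})$, since both are the same intersection of inverse images. These equalities glue to an equality of subsheaves of $\cO(\pi^*E)$ on $\pi^{-1}(W)$, and tensoring with $K_{X'}$ preserves it. Applying $\pi_*$ and \cref{prop:birational-covariance} with $t=c$ and $t=c-\varepsilon'$ gives $K_X\otimes\cE(h_c)=K_X\otimes\cE(h_{c-\varepsilon'})$ on $W$, hence $\cE(h_c)_x=\cE(h_{c-\varepsilon'})_x$ for all small $\varepsilon'$. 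So $c\notin\Jump_x(h;\psi)$. The only care needed is that the charts used are exactly the finite collection in the statement, so that the weights on the right-hand side match those used in \eqref{eq:module-family}.
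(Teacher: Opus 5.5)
Your proof is correct and takes the same route as the paper. You argue by contraposition, use coherence of the scalar multiplier ideals to spread stalkwise equality to a neighbourhood of each fibre point, and use compactness of $\pi^{-1}(x)$ to get one $\varepsilon$. You then apply \eqref{eq:module-family} upstairs and \cref{prop:birational-covariance} to descend. Two details go slightly beyond the paper: fixing a single $\varepsilon_0$ per point and using the chain of inclusions, and the properness bookkeeping with $V_i'\Subset V_i$. These make explicit the uniformity in $\varepsilon$, which the paper's proof leaves implicit.
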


\begin{proof}
Suppose that $c$ does not belong to the union on the right. At each point of the compact fibre $\pi^{-1}(x)$, the scalar multiplier ideals at $c$ and $c-\varepsilon$ agree for all sufficiently small $\varepsilon>0$. Since the ideals are coherent, this equality persists on a neighbourhood of that point. A finite subcover of the fibre and the minimum of the corresponding constants yield $\varepsilon_0>0$ such that
\[
\cI(\varphi_\nu+c\pi^*\psi)=\cI(\varphi_\nu+(c-\varepsilon)\pi^*\psi)
\]
on all charts near the fibre whenever $0<\varepsilon<\varepsilon_0$. \eqref{eq:module-family} gives equality of the multiplier submodules upstairs, and \cref{prop:birational-covariance} gives $\cE(h_c)_x=\cE(h_{c-\varepsilon})_x$.
\end{proof}

The inclusion can be strict because the scalarization is not intrinsic: a scalar jump in $\Jump_{x'}(\varphi_\nu;\pi^*\psi)$ may be invisible on the simultaneous image of the Gram morphisms.

\subsection{Analytic singularities and discreteness}

For the remainder of the paper, assume that the scalar weights and $\psi$ have analytic singularities. On a fixed scalarization chart $U$, write
\[
\varphi_\nu=c_\nu\log\left(\sum_{\alpha=1}^{q_\nu}|f_{\nu,\alpha}|^2\right)+O(1),\qquad
\pi^*\psi=d\log\left(\sum_{\beta=1}^{q}|g_\beta|^2\right)+O(1),
\]
where $c_\nu,d>0$. Let $\mathfrak a_\nu=(f_{\nu,1},\ldots,f_{\nu,q_\nu})$ and $\mathfrak a=(g_1,\ldots,g_q)$. Choose a common log resolution $\mu:Y\to U$ and write
\[
\mathfrak a_\nu\,\cO_Y=\cO_Y(-F_\nu),\qquad\mathfrak a\,\cO_Y=\cO_Y(-G),
\]
with
\[
F_\nu=\sum_i m_{\nu,i}D_i,\qquad G=\sum_i n_iD_i\qquad\text{and}\qquad K_{Y/U}=\sum_i a_iD_i.
\]
Then 
\[
\mathcal I(\varphi_{\nu}+t\pi^{\ast}\psi)=\mu_{*}\mathcal O_{Y}\left(K_{Y/U}-\left\lfloor c_{\nu}F_{\nu}+tdG\right\rfloor\right).
\]
For each $\nu$, the possible scalar jumping values are contained in
\begin{equation}
\label{eq:candidate-jumping-values}
\bigcup_{i:\,n_i>0}\left\{\frac{k-c_\nu m_{\nu,i}}{d n_i}:\ k\in\mathbb Z\right\}\cap\mathbb R_{>0}.
\end{equation}

\begin{theorem}
\label{thm:discreteness-jumping-module}
If all scalarization weights and $\psi$ have analytic singularities, then $\Jump_x(h;\psi)$ is a discrete subset of $\mathbb R_{>0}$ for every $x\in X$. In particular, it has no finite accumulation point.
\end{theorem}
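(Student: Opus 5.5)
The plan is to combine \cref{prop:jumping-contained-scalar} with the explicit description of the scalar candidate values in \eqref{eq:candidate-jumping-values}. Fix $x\in X$. Since $\pi$ is proper, the fibre $\pi^{-1}(x)$ is compact. We cover it by finitely many scalarization charts $U_1,\ldots,U_M$, shrinking them to relatively compact subcharts if necessary. On each $U_l$ there are finitely many weights $\varphi_\nu$, each with analytic singularities, and $\pi^*\psi$ also has analytic singularities there. By \cref{prop:jumping-contained-scalar}, $\Jump_x(h;\psi)$ is contained in the union, over $l$ and $\nu$ and over $x'\in\pi^{-1}(x)\cap U_l$, of the scalar jumping sets $\Jump_{x'}(\varphi_\nu;\pi^*\psi)$. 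It therefore suffices to show that this finite union of sets, taken over all $x'$ in the fibre, is discrete in $\mathbb R_{>0}$.

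For each chart $U_l$ we choose a common log resolution $\mu_l:Y_l\to U_l$ of the ideals $\mathfrak a_\nu$ and $\mathfrak a$. After shrinking $U_l$ to a relatively compact piece, $\mu_l^{-1}(\overline{U_l})$ meets only finitely many prime divisors $D_i$. The formula $\cI(\varphi_\nu+t\pi^*\psi)=\mu_{l*}\cO_{Y_l}(K_{Y_l/U_l}-\lfloor c_\nu F_\nu+tdG\rfloor)$ shows that, as $t$ varies, the ideal can only change when some coefficient $c_\nu m_{\nu,i}+t d n_i$ with $n_i>0$ crosses an integer. Hence every scalar jump at every point $x'\in U_l$ lies in the set \eqref{eq:candidate-jumping-values} built from the finitely many $D_i$. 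This set is a finite union of arithmetic progressions $\{(k-c_\nu m_{\nu,i})/(dn_i):k\in\mathbb Z\}$ with positive step $1/(dn_i)$.

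A finite union of arithmetic progressions has only finitely many points in any bounded interval, so it is discrete and has no finite accumulation point. Taking the union over the finitely many charts and weights preserves this property, and passing to the subset $\Jump_x(h;\psi)$ preserves it as well.

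The main point requiring care is uniformity: \cref{prop:jumping-contained-scalar} involves all $x'$ in the fibre, which may be an uncountable set. The argument must therefore use that the candidate set \eqref{eq:candidate-jumping-values} is a single set attached to a whole chart, not a pointwise one. This is guaranteed by using one log resolution per chart together with the finiteness of the divisors meeting the preimage of a compact set. We also need the constancy of $\lfloor c_\nu F_\nu+tdG\rfloor$ on intervals avoiding these values, which is immediate from the formula above.
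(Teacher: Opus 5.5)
Your proposal is correct and follows essentially the same route as the paper's proof. The paper likewise covers the compact fibre $\pi^{-1}(x)$ by finitely many shrunk scalarization charts, takes one log resolution per chart with finitely many relevant divisors, notes that the finite union of the candidate progressions \eqref{eq:candidate-jumping-values} is locally finite, and concludes via \cref{prop:jumping-contained-scalar}. Your explicit remark that the candidate set is attached to a whole chart, so the possibly uncountable union over $x'\in\pi^{-1}(x)$ causes no difficulty, usefully spells out a uniformity point the paper leaves implicit.
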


\begin{proof}
Choose finitely many scalarization charts covering the compact fibre $\pi^{-1}(x)$. After shrinking the charts, choose a log resolution of the finitely many ideals appearing on each one. Each resolution has only finitely many relevant divisors $D_i$, and \eqref{eq:candidate-jumping-values} is locally finite in $\mathbb R_{>0}$. The finite union of these candidate sets is locally finite.  The conclusion follows from \cref{prop:jumping-contained-scalar}. 
\end{proof}

\begin{corollary}
\label{cor:rationality-jumping-module}
If, in addition, all coefficients $c_\nu$ and $d$ are rational, then
\[
 \Jump_x(h;\psi)\subset\mathbb Q_{>0}.
\]
\end{corollary}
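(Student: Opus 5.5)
By \cref{prop:jumping-contained-scalar}, it suffices to show that every scalar jumping number $c\in\Jump_{x'}(\varphi_\nu;\pi^*\psi)$ that occurs is rational. Here $x'$ ranges over $\pi^{-1}(x)$, and we use the same finite collection of scalarization charts and log resolutions as in the proof of \cref{thm:discreteness-jumping-module}.

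Fix such a chart $U$, the resolution $\mu:Y\to U$, and an index $\nu$. From the formula
\[
\cI(\varphi_\nu+t\pi^*\psi)=\mu_*\cO_Y\bigl(K_{Y/U}-\lfloor c_\nu F_\nu+tdG\rfloor\bigr),
\]
the ideal can change only at values of $t$ where some coefficient $c_\nu m_{\nu,i}+t\,d\,n_i$ with $n_i>0$ crosses an integer. Divisors with $n_i=0$ contribute a coefficient independent of $t$, so they cannot produce a jump. The candidate values are therefore exactly the set \eqref{eq:candidate-jumping-values}, namely
\[
t=\frac{k-c_\nu m_{\nu,i}}{d\,n_i},\qquad k\in\mathbb Z.
\]
Since $m_{\nu,i},n_i\in\mathbb Z_{\ge0}$, $n_i>0$, and $c_\nu,d\in\mathbb Q_{>0}$, each such value is rational.

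Combining this over the finitely many charts, indices $\nu$ and divisors $D_i$, \cref{prop:jumping-contained-scalar} places $\Jump_x(h;\psi)$ inside a finite union of subsets of $\mathbb Q$. Intersecting with $\mathbb R_{>0}$ gives $\Jump_x(h;\psi)\subset\mathbb Q_{>0}$.

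The only point needing care is to justify that the scalar jumping values really lie in \eqref{eq:candidate-jumping-values}. The $O(1)$ terms in the analytic-singularity expressions do not affect multiplier ideals. Also, $t\mapsto\lfloor c_\nu F_\nu+tdG\rfloor$ is right-continuous and piecewise constant, so it is constant on each interval $[c-\varepsilon,c)$ and changes only at the breakpoints listed. Hence $\cI(\varphi_\nu+t\pi^*\psi)$ is locally constant away from those breakpoints, and any jump $c$ must be one of them.
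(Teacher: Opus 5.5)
Your proposal is correct and follows the same route as the paper: \cref{prop:jumping-contained-scalar} reduces module jumps to scalar jumps on finitely many charts over the fibre. Those scalar jumps lie in the candidate set \eqref{eq:candidate-jumping-values} determined by $\lfloor c_\nu F_\nu+tdG\rfloor$, and each candidate $(k-c_\nu m_{\nu,i})/(dn_i)$ is rational once $c_\nu,d\in\mathbb Q$. (Finiteness of the union is harmless here but not needed; it matters only for discreteness, not for rationality.)
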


\subsection{Actual jumps and simultaneous residues}

Fix one scalarization chart $U$ and set
\[
 \widetilde{\mathcal M}(t)=\cE(\pi^*h_t)|_U.
\]
For a candidate value $c$, choose $0<\varepsilon\ll1$ so that there is no other candidate
value in $[c-\varepsilon,c)$. Put
\[
 \mathcal J_\nu^-=\cI(\varphi_\nu+(c-\varepsilon)\pi^*\psi)\qquad\text{and}\qquad\mathcal J_\nu^+=\cI(\varphi_\nu+c\pi^*\psi).
\]
Write
\[
 \alpha_{\nu,i}=c_\nu m_{\nu,i},\qquad \beta_i=dn_i,
\]
and define the active set and reduced jumping divisor by
\[
S_\nu(c)=\{i:\beta_i>0,\ \alpha_{\nu,i}+c\beta_i\in\mathbb Z\},\qquad D_\nu(c)=\sum_{i\in S_\nu(c)}D_i.
\] 
For sufficiently small $\varepsilon$,
\[
\lfloor c_\nu F_\nu+cdG\rfloor=\lfloor c_\nu F_\nu+(c-\varepsilon)dG\rfloor+D_\nu(c).
\]
Let
\[
L_\nu(c)=\cO_Y\bigl(K_{Y/U}-\lfloor c_\nu F_\nu+cdG\rfloor\bigr).
\]
Then
\[
\mathcal J_\nu^+=\mu_*L_\nu(c)\qquad\text{and}\qquad\mathcal J_\nu^-=\mu_*\bigl(L_\nu(c)(D_\nu(c))\bigr).
\]
The morphism
\begin{equation}\label{eq:rho-c}
\rho_c:\widetilde{\mathcal M}(c-\varepsilon)\longrightarrow\bigoplus_{\nu=1}^{N}(\mathcal J_\nu^-/\mathcal J_\nu^+)^{\oplus m_\nu},\qquad F\longmapsto([A_1F],\ldots,[A_NF]),
\end{equation}
has kernel $\widetilde{\mathcal M}(c)$. Hence
\begin{equation}\label{eq:upstairs-jump-image}
 \frac{\widetilde{\mathcal M}(c-\varepsilon)}{\widetilde{\mathcal M}(c)}\simeq\operatorname{im}\,\rho_c.
\end{equation}

For a vector-valued germ $g=(g_1,\ldots,g_m)$, put
\[
 v_{D_i}(g)=\min_j\operatorname{ord}_{D_i}(\mu^*g_j),
\]
and define
\[
q_{\nu,i}(c)=\lfloor\alpha_{\nu,i}+c\beta_i\rfloor-a_i,\qquad\chi_{\nu,i}(c)=
 \begin{cases}
 1,&i\in S_\nu(c),\\
 0,&i\notin S_\nu(c).
 \end{cases}
\]
Then $F\in\widetilde{\mathcal M}(c-\varepsilon)$ if and only if
\begin{equation}\label{eq:prejump-valuations}
v_{D_i}(A_\nu F)\geq q_{\nu,i}(c)-\chi_{\nu,i}(c)\quad\text{for all }\nu,i,
\end{equation}
and $F\in\widetilde{\mathcal M}(c)$ if and only if $v_{D_i}(A_\nu F)\geq q_{\nu,i}(c)$ for all $\nu,i$.

\begin{theorem}[Upstairs jump criterion]\label{thm:upstairs-jump}
For a candidate value $c$, the following are equivalent:
\begin{enumerate}[label=\textup{(\arabic*)},leftmargin=2.4em]
 \item $\widetilde{\mathcal M}(c-\varepsilon)/\widetilde{\mathcal M}(c)\neq0$;
 \item $\rho_c\neq0$;
 \item there is a germ $F$ satisfying \eqref{eq:prejump-valuations} such that, for some
       active pair $(\nu,i)$,
       \[
        v_{D_i}(A_\nu F)=q_{\nu,i}(c)-1;
       \]
 \item the simultaneous residue of some pre-jump admissible section is non-zero.
\end{enumerate}
Moreover,
\[
\Supp\frac{\widetilde{\mathcal M}(c-\varepsilon)}{\widetilde{\mathcal M}(c)}\subseteq\bigcup_{\nu=1}^{N}\mu(D_\nu(c)).
\]
\end{theorem}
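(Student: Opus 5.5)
The plan is to prove the equivalences in a cycle and then derive the support statement from the definition of $\rho_c$. Throughout, I use the valuative descriptions \eqref{eq:prejump-valuations} of $\widetilde{\mathcal M}(c-\varepsilon)$ and $\widetilde{\mathcal M}(c)$. Both come from the formula $\mathcal I(\varphi_\nu+t\pi^*\psi)=\mu_*\mathcal O_Y(K_{Y/U}-\lfloor c_\nu F_\nu+tdG\rfloor)$ together with \eqref{eq:module-family}.

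\textbf{(1)$\Leftrightarrow$(2).} This is immediate from \eqref{eq:upstairs-jump-image}: the quotient is isomorphic to $\operatorname{im}\rho_c$, and $\rho_c\neq0$ means exactly that its image is non-zero.

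\textbf{(1)$\Leftrightarrow$(3).} A non-zero class in the quotient is represented by a germ $F$ that satisfies \eqref{eq:prejump-valuations} but lies outside $\widetilde{\mathcal M}(c)$. So there is some pair $(\nu,i)$ with $v_{D_i}(A_\nu F)<q_{\nu,i}(c)$. For inactive $i$ we have $\chi_{\nu,i}=0$, so the pre-jump inequality already gives $v_{D_i}(A_\nu F)\ge q_{\nu,i}(c)$; the failing pair must therefore be active. For an active pair the two bounds give $q_{\nu,i}-1\le v_{D_i}(A_\nu F)<q_{\nu,i}$, which forces equality with $q_{\nu,i}(c)-1$. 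Conversely, a germ as in (3) satisfies \eqref{eq:prejump-valuations} and violates the condition for $\widetilde{\mathcal M}(c)$ at that active pair, so its class is non-zero.

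The one point needing care is that $\mu_*$ of a divisorial sheaf is detected by order conditions along the $D_i$. Concretely, $g\in\mu_*\mathcal O_Y(K_{Y/U}-B)$ if and only if $\operatorname{ord}_{D_i}\mu^*g\ge \lfloor b_i\rfloor-a_i$ for all $i$. This holds because $\mu^*g$ is holomorphic and $\mu$ is an isomorphism off the exceptional locus plus the $D_i$. I would record this and apply it componentwise to $A_\nu F$, noting that the minimum over components $j$ in $v_{D_i}$ matches the componentwise membership.

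\textbf{(3)$\Leftrightarrow$(4).} For this I define the simultaneous residue of a pre-jump admissible section $F\in\widetilde{\mathcal M}(c-\varepsilon)$. It is the tuple, over active pairs $(\nu,i)$, of the restrictions to $D_i$ of $\mu^*(A_\nu F)$ regarded as sections of $L_\nu(c)(D_\nu(c))$, taken modulo $L_\nu(c)$, that is, their images in $L_\nu(c)(D_\nu(c))|_{D_\nu(c)}$. A section of $L_\nu(c)(D_\nu(c))$ has non-zero image in this restriction along $D_i$ exactly when its order along $D_i$ equals the minimal allowed value $q_{\nu,i}(c)-1$. This gives the equivalence with (3). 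I expect this identification to be the main obstacle. One must make precise which residue map is meant and check that non-vanishing on some component $D_i$ matches the valuation equality. One must also check that the map $\mathcal J^-_\nu/\mathcal J^+_\nu\to \mu_*(L_\nu(c)(D_\nu(c))|_{D_\nu(c)})$ is injective, which follows from left exactness of $\mu_*$ applied to $0\to L_\nu(c)\to L_\nu(c)(D_\nu(c))\to L_\nu(c)(D_\nu(c))|_{D_\nu(c)}\to0$. With injectivity in hand, $\rho_c$ composed with this injection is exactly the simultaneous residue, which gives (2)$\Leftrightarrow$(4) directly as well.

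\textbf{Support.} By \eqref{eq:upstairs-jump-image} the quotient embeds in $\bigoplus_\nu(\mathcal J^-_\nu/\mathcal J^+_\nu)^{\oplus m_\nu}$. Via the same injection, each summand embeds in a direct image of a sheaf supported on $D_\nu(c)$. Since $\mu$ is proper, that direct image is supported in $\mu(D_\nu(c))$. Equivalently, off $\bigcup_\nu\mu(D_\nu(c))$ the divisor $D_\nu(c)$ does not appear, so $\mathcal J^-_\nu=\mathcal J^+_\nu$ there.
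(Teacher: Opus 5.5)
Your proposal is correct and follows essentially the same route as the paper. You get (1)$\Leftrightarrow$(2) from \eqref{eq:upstairs-jump-image}, (1)$\Leftrightarrow$(3) from the valuative descriptions, and (4) together with the support bound from the sequence $0\to L_\nu(c)\to L_\nu(c)(D_\nu(c))\to L_\nu(c)(D_\nu(c))|_{D_\nu(c)}\to0$. The one difference is that the paper invokes local vanishing to obtain an isomorphism $\mathcal J_\nu^-/\mathcal J_\nu^+\simeq\mu_*\bigl(L_\nu(c)(D_\nu(c))|_{D_\nu(c)}\bigr)$, whereas you rightly note that left exactness of $\mu_*$ already gives the injection, and that is all the equivalences and the support statement need.
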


\begin{proof}
The equivalence of (1) and (2) is \eqref{eq:upstairs-jump-image}. The valuation description of the two modules shows that their quotient is non-zero precisely when one of the newly imposed active inequalities is sharp, proving the equivalence with (3).

The exact sequence
\[
0\to L_\nu(c)\to L_\nu(c)(D_\nu(c))\to L_\nu(c)(D_\nu(c))|_{D_\nu(c)}\to0
\]
and local vanishing for multiplier ideals give
\[
\mathcal J_\nu^-/\mathcal J_\nu^+\simeq\mu_*\bigl(L_\nu(c)(D_\nu(c))|_{D_\nu(c)}\bigr).
\]
Thus \eqref{eq:rho-c} is the simultaneous residue map, proving (4). The support statement
follows from the same description.
\end{proof}

\begin{example}
Let $U=\Delta$, $E=\cO_\Delta$, $A(f)=zf$, $\varphi=0$, and $\psi=\log|z|^2$. Then
\[
 \cI(t\psi)=(z^{\lfloor t\rfloor}).
\]
At $c=1$, one has $\mathcal J^-=\cO_\Delta$ and $\mathcal J^+=(z)$, but
\[
A^{-1}(\mathcal J^-)=\cO_\Delta\qquad\text{and}\qquad A^{-1}(\mathcal J^+)=\{f:zf\in(z)\}=\cO_\Delta.
\]
Thus the scalar ideal jumps while the module does not. At $c=2$, $\mathcal J^-=(z)$ and $\mathcal J^+=(z^2)$, so
\[
A^{-1}(\mathcal J^-)=\cO_\Delta\qquad\text{and}\qquad A^{-1}(\mathcal J^+)=(z),
\]
and $2$ is an actual module jumping number. The divisorial coefficients determine the
candidate values, whereas the Gram morphisms and their simultaneous syzygies determine
which candidates survive.
\end{example}

An upstairs jump may disappear after descent.

\begin{proposition}[Descent of jump quotients]\label{prop:jump-descent}
Let 
\[
\cQ_c=\cE(h_{c-\varepsilon})/\cE(h_{c})\qquad\textrm{and}\qquad\widetilde \cQ_c=\cE(\pi^*h_{c-\varepsilon})/\cE(\pi^*h_c).
\]
Then
\begin{equation}\label{eq:jump-pushforward-image}
K_X\otimes \cQ_c\simeq\mathrm{im}\,\left[\pi_*\bigl(K_{X'}\otimes\cE(\pi^*h_{c-\varepsilon})\bigr)\longrightarrow\pi_*\bigl(K_{X'}\otimes\widetilde \cQ_c\bigr)\right].
\end{equation}
Consequently, a downstairs jump implies an upstairs jump over the fibre, but the converse
need not hold. A candidate is an actual downstairs jump exactly when the canonical
push-forward of the simultaneous residue map has non-zero image.
\end{proposition}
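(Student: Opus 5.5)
The plan is to push forward the short exact sequence of sheaves upstairs and compare the result with the corresponding quotient downstairs, using \cref{prop:birational-covariance} for both parameters $c-\varepsilon$ and $c$.

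Put $\mathcal F^\pm$ for $K_{X'}\otimes\cE(\pi^*h_{c-\varepsilon})$ and $K_{X'}\otimes\cE(\pi^*h_c)$. Since $K_{X'}$ is invertible, tensoring
\[
0\to\cE(\pi^*h_c)\to\cE(\pi^*h_{c-\varepsilon})\to\widetilde\cQ_c\to0
\]
with $K_{X'}$ stays exact. Applying the left exact functor $\pi_*$ gives
\[
0\to\pi_*\mathcal F^+\to\pi_*\mathcal F^-\to\pi_*(K_{X'}\otimes\widetilde\cQ_c).
\]
Hence the image of the last arrow is $\pi_*\mathcal F^-/\pi_*\mathcal F^+$.

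By \cref{prop:birational-covariance} with $t=c-\varepsilon$ and $t=c$, we have $\pi_*\mathcal F^-=K_X\otimes\cE(h_{c-\varepsilon})$ and $\pi_*\mathcal F^+=K_X\otimes\cE(h_c)$. The point to check is that these identifications are compatible with the inclusion $\cE(h_c)\subset\cE(h_{c-\varepsilon})$. This holds because both are induced by the same pull-back of canonical forms $\sigma\mapsto\pi^*\sigma$, so they are natural in $t$. Since $K_X$ is locally free, $K_X\otimes(\cE(h_{c-\varepsilon})/\cE(h_c))=K_X\otimes\cQ_c$, and this proves \eqref{eq:jump-pushforward-image}.

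For the consequences:
\begin{enumerate}
\item If $(\cQ_c)_x\neq0$, then the image in \eqref{eq:jump-pushforward-image} has a nonzero stalk at $x$. So $\pi_*(K_{X'}\otimes\widetilde\cQ_c)_x\neq0$.
\item The fibre $\pi^{-1}(x)$ is compact, so a section of $K_{X'}\otimes\widetilde\cQ_c$ over a neighbourhood of it that is nonzero must have a nonzero germ at some $x'\in\pi^{-1}(x)$. That is an upstairs jump over the fibre.
\item By \eqref{eq:upstairs-jump-image}, on each scalarization chart $\widetilde\cQ_c\simeq\operatorname{im}\rho_c$, and by \cref{thm:upstairs-jump} $\rho_c$ is the simultaneous residue map. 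Rewriting the image in \eqref{eq:jump-pushforward-image} as the image of $\pi_*(K_{X'}\otimes\rho_c)$ gives the final criterion: a candidate is an actual downstairs jump exactly when this push-forward has nonzero image.
\end{enumerate}
One technical point is that the charts must be glued. The maps $\rho_c$ are defined chartwise, but their kernels glue to the global sheaf $\cE(\pi^*h_c)$, so the intrinsic statement uses $\widetilde\cQ_c$.

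To show the converse can fail, I would give an example where a class in $\pi_*(K_{X'}\otimes\widetilde\cQ_c)$ does not lift to $\pi_*\mathcal F^-$. The obstruction to lifting lies in $R^1\pi_*\mathcal F^+$. Alternatively, I would look for a case where upstairs germs at points of the fibre jump but no germ pulled back from $X$ realizes the jump. A concrete candidate is the blow-up of a point in $\mathbb C^2$, with $E$ trivial and a Gram morphism vanishing along the exceptional divisor. This example is the step I expect to be the main obstacle. The formal part is routine, but constructing an explicit non-descending jump requires a computation. If it resists, I will only argue that surjectivity of $\pi_*\mathcal F^-\to\pi_*(K_{X'}\otimes\widetilde\cQ_c)$ is not implied by the construction.
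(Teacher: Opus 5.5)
Your proof of the isomorphism is the paper's argument: push forward the $K_{X'}$-twisted short exact sequence, use left exactness of $\pi_*$, and identify the first two direct images via \cref{prop:birational-covariance}. Two of your additions fill in points the paper leaves implicit, and both are correct. First, both identifications are induced by $\sigma\mapsto\pi^*\sigma$, so they are compatible with the inclusion $\cE(h_c)\subset\cE(h_{c-\varepsilon})$. Second, properness of $\pi$ shows that a downstairs jump forces a non-zero germ of $\widetilde\cQ_c$ at some point of $\pi^{-1}(x)$.

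The incomplete point is the claim that the converse can fail. Your fallback, that surjectivity ``is not implied by the construction'', proves nothing; an explicit example is required, and the paper does not give one either. You do not need a Gram morphism vanishing on the exceptional divisor.
\begin{itemize}
\item Setup: take $X$ the unit ball in $\mathbb C^2$ and $E=\cO_X$ with the trivial metric. Let $\pi$ be the blow-up of the origin, with exceptional curve $D$ and trivial scalarization $N=1$, $\varphi_1=0$, $A_1=\mathrm{id}$. Take $\psi=\log(|z_1|^2+|z_2|^2)$ and $c=1$.
\item Upstairs: $\pi^*\psi=\log|w|^2+O(1)$ with $D=\{w=0\}$. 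Hence $\cE(\pi^*h_t)=\cO_{X'}(-\lfloor t\rfloor D)$, and $\widetilde\cQ_1=\cO_D$ is non-zero at every point of the fibre.
\item Downstairs: $\cE(h_t)=\cI(t\log|z|^2)=\cO_X$ for $t<2$, so $1$ is not a jump at the origin.
\end{itemize}
In terms of \eqref{eq:jump-pushforward-image}, $K_{X'}\otimes\widetilde\cQ_1\simeq\cO_D(D)\simeq\cO_{\mathbb P^1}(-1)$ has no sections. So the target $\pi_*(K_{X'}\otimes\widetilde\cQ_1)$ is already zero at the origin.

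Note also that here $\mathcal F^+=K_{X'}(-D)=\pi^*K_X$, so $R^1\pi_*\mathcal F^+=K_X\otimes R^1\pi_*\cO_{X'}=0$. Thus the failure does not come from the lifting obstruction in $R^1\pi_*\mathcal F^+$ that you proposed to look for. The upstairs jump is simply absorbed by the relative canonical divisor $K_{X'/X}=D$.
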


\begin{proof}
Apply $\pi_*$ to
\[
0\to K_{X'}\otimes\cE(\pi^*h_c)\to K_{X'}\otimes\cE(\pi^*h_{c-\varepsilon})\to K_{X'}\otimes\widetilde \cQ_c\to0.
\]
Left exactness identifies the quotient of the first two direct images with the image in the third. The transformation formula \eqref{eq:birational-covariance} identifies that quotient with $K_X\otimes \cQ_c$, giving \eqref{eq:jump-pushforward-image}.
\end{proof}

\section{Skoda periodicity}\label{sec:skoda}

Fix an $n$-dimensional scalarization chart $U\subset X'$. Write
\[
\varphi_\nu=c_\nu\log\!\left(\sum_{\alpha=1}^{q_\nu}|f_{\nu,\alpha}|^2\right)+O(1),\qquad
\pi^*\psi=d\log\!\left(\sum_{\beta=1}^{q}|g_\beta|^2\right)+O(1),
\]
where $c_\nu>0$ and
\[
d=\frac{r}{s}\in\mathbb Q_{>0},\qquad r,s\in\mathbb N,\qquad\gcd(r,s)=1.
\]
Set
\[
|g|^2=\sum_{\beta=1}^{q}|g_\beta|^2,\qquad\mathfrak a=(g_1,\ldots,g_q)\subset\mathcal O_U.
\]
For each integer $k\ge0$, set
\[
h_k=h\,e^{-k\psi},\qquad\mathcal J_{\nu,k}=\mathcal I(\varphi_\nu+k\pi^*\psi)\mathcal O_U^{m_\nu}\qquad\text{and}\qquad\mathcal J_k=\bigoplus_{\nu=1}^N\mathcal J_{\nu,k}.
\]
Since locally bounded perturbations do not affect multiplier ideals, we suppress the $O(1)$-terms in the analytic-singularity expressions whenever convenient.

\begin{proposition}[Scalar Skoda periodicity]\label{prop:scalar-skoda-rational}
Put
\[
 p=\min\{n,q-1\}.
\]
For every real number $t\ge p$ and every $\nu$,
\begin{equation}\label{eq:scalar-skoda-real}
\mathcal I\!\left(\varphi_\nu+(t+1)\log|g|^2\right)=\mathfrak a\,\mathcal I\!\left(\varphi_\nu+t\log|g|^2\right).
\end{equation}
Consequently, if
\[
k_0=\left\lceil\frac{p}{d}\right\rceil=\left\lceil\frac{ps}{r}\right\rceil,
\]
then
\begin{equation}\label{eq:scalar-skoda-rational}
\mathcal J_{\nu,k+s}=\mathfrak a^r\,\mathcal J_{\nu,k}\qquad \text{for every $\nu$ and every integer $k\ge k_0$}.
\end{equation}
Hence
\begin{equation}\label{eq:J-rational}
\mathcal J_{k+s}=\mathfrak a^r\,\mathcal J_k,\qquad k\ge k_0.
\end{equation}
\end{proposition}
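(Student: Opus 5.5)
The plan has two stages: first prove \eqref{eq:scalar-skoda-real} for real $t\ge p$, then convert it to the integer-indexed statement.

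For the real statement, I will use Skoda's division theorem with a plurisubharmonic twist, in the form given in \cite{Sko72}; this is also the analytic input behind the recursion in \cite{ELSV04}. The inclusion $\mathfrak a\,\cI(\varphi_\nu+t\log|g|^2)\subseteq\cI(\varphi_\nu+(t+1)\log|g|^2)$ is elementary. If $f\in\cI(\varphi_\nu+t\log|g|^2)$, then $|g_\beta f|^2\le|g|^2|f|^2$, so
\[
|g_\beta f|^2e^{-\varphi_\nu-(t+1)\log|g|^2}\le|f|^2e^{-\varphi_\nu-t\log|g|^2}.
\]
For the reverse inclusion, take $f$ in the left-hand ideal on a small pseudoconvex (Stein) ball. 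Apply Skoda's $L^2$ division with weight $\varphi_\nu+t\log|g|^2$, which is plurisubharmonic, and with division exponent $\alpha>1$ chosen so that $\alpha p\le t+1$ can be arranged; the genuine requirement is $t\ge p$ with $\alpha=(t+1)/p$ after adjusting constants. This produces $u_\beta$ with $f=\sum g_\beta u_\beta$ and
\[
\int|u|^2|g|^{-2\alpha p+2}e^{-\varphi_\nu}\le C\int|f|^2|g|^{-2\alpha p-2}e^{-\varphi_\nu}.
\]
With $\alpha p=t+1$ the right side is finite and $u_\beta\in\cI(\varphi_\nu+t\log|g|^2)$. Note that the rank of the relevant vector bundle map is at most $\min\{n,q\}$, and Skoda's sharp form uses $\min\{n,q-1\}$. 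The same argument works when $\varphi_\nu$ is only given up to $O(1)$, since bounded perturbations do not change either integral's finiteness.

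I expect this exponent bookkeeping to be the main obstacle. Skoda's theorem needs strict inequality $\alpha>1$, and one must check that $t\ge p$, rather than $t>p$, suffices. I would first try to settle this by applying strong openness, \cref{lem:mixed-openness}, to perturb $t$ slightly: from $f\in\cI(\varphi_\nu+(t+1)\log|g|^2)$ we get $f\in\cI(\varphi_\nu+(t+1+\delta)\log|g|^2)$ for small $\delta>0$. Then I divide with $\alpha p=t+1+\delta$ and exponent $\alpha=(t+1+\delta)/p>1$ (when $p\ge1$; the case $p=0$, i.e. $q=1$, is trivial because $\mathfrak a$ is principal). The resulting $u$ then lies in $\cI(\varphi_\nu+(t+\delta)\log|g|^2)\subseteq\cI(\varphi_\nu+t\log|g|^2)$.

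For the rational statement, note that $k\pi^*\psi=kd\log|g|^2+O(1)$, so $\mathcal J_{\nu,k}=\cI(\varphi_\nu+(kr/s)\log|g|^2)\cO_U^{m_\nu}$. If $k\ge k_0$, then $t=kr/s\ge p$, and iterating \eqref{eq:scalar-skoda-real} $r$ times at the values $t,t+1,\ldots,t+r-1$, all of which are $\ge p$, gives
\[
\cI(\varphi_\nu+(t+r)\log|g|^2)=\mathfrak a^r\cI(\varphi_\nu+t\log|g|^2).
\]
Since $t+r=(k+s)r/s$, this is \eqref{eq:scalar-skoda-rational} after tensoring with $\cO_U^{m_\nu}$. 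Taking the direct sum over $\nu$ gives \eqref{eq:J-rational}.
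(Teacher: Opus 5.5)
Your architecture matches the paper's: Cauchy--Schwarz for the easy inclusion, \cref{lem:mixed-openness} to gain a small $\delta>0$, Skoda division with twist $e^{-\varphi_\nu}$ when $q\ge2$, a direct argument when $q=1$, and then $r$-fold iteration. But the central division step, as you wrote it, does not go through, because your exponents in Skoda's theorem are off by one.

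Skoda's theorem says the following. For $\alpha>1$ and $p=\min\{n,q-1\}$, finiteness of $\int|f|^2|g|^{-2(\alpha p+1)}e^{-\varphi}$ yields $f=\sum_\beta g_\beta u_\beta$ with $\int|u|^2|g|^{-2\alpha p}e^{-\varphi}\le\frac{\alpha}{\alpha-1}\int|f|^2|g|^{-2(\alpha p+1)}e^{-\varphi}$. So division costs exactly one power of $|g|^2$. Your displayed inequality, with $|g|^{-2\alpha p+2}$ on the left, costs two.

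With your choices $\alpha p=t+1$, resp.\ $\alpha p=t+1+\delta$, the integral Skoda actually requires is $\int|f|^2|g|^{-2(t+2)}e^{-\varphi_\nu}$, resp.\ $\int|f|^2|g|^{-2(t+2+\delta)}e^{-\varphi_\nu}$. Neither is controlled by $f\in\cI(\varphi_\nu+(t+1)\log|g|^2)$, even after strong openness, so your assertion that ``the right side is finite'' is unjustified.

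The threshold your argument yields also shows the bookkeeping must be wrong. Since $\alpha=(t+1+\delta)/p>1$ already holds at $t=p-1$, your argument would give the recursion there. Yet for $\varphi_\nu=0$ and $\mathfrak a=(z_1,\ldots,z_n)$ at $0\in\mathbb C^n$ with $n\ge2$, one has $\cI((n-1)\log|z|^2)=\cO$ while $\mathfrak a\,\cI((n-2)\log|z|^2)=\mathfrak a$.

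The repair is to take $\alpha p=t+\delta$, i.e.\ $\alpha=(t+\delta)/p$.
\begin{itemize}
\item The integrability Skoda needs is then exactly $\int|f|^2|g|^{-2(t+1+\delta)}e^{-\varphi_\nu}<\infty$, which \cref{lem:mixed-openness} supplies.
\item The condition $\alpha>1$ holds precisely because $t\ge p$ and $\delta>0$. This is where both the hypothesis $t\ge p$ and the openness step are genuinely used.
\item The output is $u_\beta\in\cI(\varphi_\nu+(t+\delta)\log|g|^2)\subseteq\cI(\varphi_\nu+t\log|g|^2)$. The inclusion holds because $|g|^{2\delta}$ is locally bounded (the paper normalizes $|g|\le1$).
\item The twist must be $\varphi_\nu$ alone, not $\varphi_\nu+t\log|g|^2$, or the powers of $|g|$ get counted twice.
\end{itemize}
This is exactly the paper's proof.

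For $q=1$, your ``trivial'' deserves the one line the paper gives. Since $e^{-\varphi_\nu}|g|^{-2t}$ is locally bounded below, $f/g\in L^2_{\mathrm{loc}}$ extends holomorphically, and $h=f/g$ has the required integrability.

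Your passage to the integer-indexed statement is correct as written.
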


\begin{proof}
After shrinking $U$ and, if necessary, multiplying all generators $g_j$ by the same non-zero constant, we may assume that $|g|^2\le1$ on $U$.

We first prove~\eqref{eq:scalar-skoda-real}. The inclusion
\[
\mathfrak a\,\mathcal I\!\left(\varphi_\nu+t\log|g|^2\right)\subset\mathcal I\!\left(\varphi_\nu+(t+1)\log|g|^2\right)
\]
follows immediately from the Cauchy--Schwarz inequality. Indeed, if
\[
h_1,\ldots,h_q\in\mathcal I\!\left(\varphi_\nu+t\log|g|^2\right)
\]
and $f=\sum_{j=1}^q g_jh_j$, then
\[
 |f|^2\le |g|^2\sum_{j=1}^q|h_j|^2,
\]
and therefore
\[
|f|^2e^{-\varphi_\nu}|g|^{-2(t+1)}\le\left(\sum_{j=1}^q|h_j|^2\right)e^{-\varphi_\nu}|g|^{-2t}.
\]

For the reverse inclusion, assume first that $q\ge2$, so $p>0$, and take
\[
 f\in\mathcal I\!\left(\varphi_\nu+(t+1)\log|g|^2\right),\qquad t\ge p.
\]
By \cref{lem:mixed-openness}, there is $\varepsilon>0$ such that
\[
 f\in\mathcal I\!\left(\varphi_\nu+(t+1+\varepsilon)\log|g|^2\right).
\]
After shrinking $U$ around the point under consideration,
\begin{equation}\label{eq:skoda-integrability}
\int_U|f|^2e^{-\varphi_\nu}|g|^{-2(t+1+\varepsilon)}\,dV<\infty.
\end{equation}
Set
\[
 \alpha=\frac{t+\varepsilon}{p}>1.
\]
Then~\eqref{eq:skoda-integrability} is precisely the integrability hypothesis of the local analytic Skoda division theorem \cite{Sko72}. Hence there exist holomorphic functions
$h_1,\ldots,h_q$ such that $f=\sum_{j=1}^qg_jh_j$ and
\[
\int_U\left(\sum_{j=1}^q|h_j|^2\right)e^{-\varphi_\nu}|g|^{-2(t+\varepsilon)}\,dV<\infty.
\]
Thus
\[
h_j\in\mathcal I\!\left(\varphi_\nu+(t+\varepsilon)\log|g|^2\right)\subset\mathcal I\!\left(\varphi_\nu+t\log|g|^2\right),
\]
where the last inclusion uses $|g|^2\le1$. Therefore
\[
f\in\mathfrak a\,\mathcal I\!\left(\varphi_\nu+t\log|g|^2\right).
\]

It remains to consider $q=1$. Then $p=0$ and $\mathfrak a=(g)$. Let
$t\ge0$ and
\[
f\in\mathcal I\!\left(\varphi_\nu+(t+1)\log|g|^2\right).
\]
Since $\varphi_\nu$ is locally bounded above, after shrinking $U$ there is $C>0$ such that $e^{-\varphi_\nu}\ge C$.  Hence near $\{g=0\}$,
\[
\int_U\frac{|f|^2}{|g|^2}\,dV\le C^{-1}\int_U|f|^2e^{-\varphi_\nu}|g|^{-2(t+1)}\,dV<\infty.
\]
Thus $f/g$, initially holomorphic outside $\{g=0\}$, is locally square-integrable and extends holomorphically. Writing $f=gh$, one has
\[
|h|^2e^{-\varphi_\nu}|g|^{-2t}=|f|^2e^{-\varphi_\nu}|g|^{-2(t+1)},
\]
so
\[
h\in\mathcal I\!\left(\varphi_\nu+t\log|g|^2\right).
\]
This proves~\eqref{eq:scalar-skoda-real} in all cases.

Now let $k\ge k_0$. Then $kd\ge p$, and
\[
 (k+s)d=kd+r.
\]
Because locally bounded perturbations do not affect multiplier ideals,
\[
\mathcal I(\varphi_\nu+(k+s)\pi^*\psi)=\mathcal I\!\left(\varphi_\nu+(kd+r)\log|g|^2\right).
\]
Iterating~\eqref{eq:scalar-skoda-real} $r$ times gives
\[
\mathcal I\!\left(\varphi_\nu+(kd+r)\log|g|^2\right)=\mathfrak a^r\,\mathcal I\!\left(\varphi_\nu+kd\log|g|^2\right),
\]
which is~\eqref{eq:scalar-skoda-rational}. Tensoring with the free modules $\mathcal O_U^{m_\nu}$ and taking the direct sum over $\nu$ gives~\eqref{eq:J-rational}.
\end{proof}

Let
\[
 \mathcal V=\mathcal O_U(\pi^*E),\qquad\mathcal P=\bigoplus_{\nu=1}^N\mathcal O_U^{m_\nu}
 \qquad\text{and}\qquad\cA=\bigoplus_{\nu=1}^N A_\nu:\mathcal V\longrightarrow\mathcal P.
\]
The morphism $\cA$ is injective. Indeed, if $\cA(F)=0$, then the upper bound in the scalarization inequality gives $|F|_{\pi^*h}=0$ almost everywhere; positive definiteness of $h$ almost everywhere and holomorphicity imply $F=0$. Put
\[
\cG=\operatorname{im}\cA\subset \cP\qquad\text{and}\qquad F_k\cG=\cG\cap\mathcal J_k.
\]
As above, the scalarization formula and the injectivity of $\cA$ give a natural isomorphism
\begin{equation}\label{eq:gram-identification-rational}
\mathcal E(\pi^*h_k)|_U\xrightarrow{\ \sim\ }F_k\cG,\qquad F\longmapsto \cA(F).
\end{equation}

\subsection{Rees-standardness}

For each residue class
\[
 \ell\in\{0,\ldots,s-1\},
\]
let $t_\ell$ be the smallest integer satisfying
\[
t_\ell\ge k_0,\qquad t_\ell\equiv\ell\pmod s.
\]
Define
\[
\mathcal J_q^{(\ell)}:=\mathcal J_{t_\ell+sq},\qquad F_q^{(\ell)}\cG:=F_{t_\ell+sq}\cG.
\]
By Proposition~\ref{prop:scalar-skoda-rational},
\begin{equation}\label{eq:residue-scalar}
\mathcal J_{q+1}^{(\ell)}=\mathfrak a^r\,\mathcal J_q^{(\ell)},\qquad q\ge0.
\end{equation}
Let
\[
\mathcal R(\mathfrak a^r)=\bigoplus_{q\ge0}\mathfrak a^{rq}\,T^q
\]
be the Rees algebra of $\mathfrak a^r$, and put
\[
\mathcal R_F^{(\ell)}(\cG;t_\ell)=\bigoplus_{q\ge0}F_{t_\ell+sq}\cG\,T^q.
\]
By~\eqref{eq:residue-scalar}, this is naturally a graded $\mathcal R(\mathfrak a^r)$-module.

\begin{definition}
The filtered Gram image is said to be \emph{eventually $(r,s)$-Rees-standard} with respect to $\mathfrak a$ if there exists an integer $k_1\ge k_0$ such that
\[
F_{k+s}\cG=\mathfrak a^r\,F_k\cG\qquad\text{for every integer $k\ge k_1$}.
\]
Equivalently, for every residue class $\ell$, there exists an integer $t'_{\ell}\ge k_{1}$, $t'_{\ell}\equiv\ell\pmod s$, such that
\[
\bigoplus_{q\ge0}F_{t'_\ell+sq}\cG\,T^q
\]
is generated in degree zero over $\mathcal R(\mathfrak a^r)$.
\end{definition}

The key point is that eventual standardness is automatic.

\begin{theorem}\label{thm:AR-rational}
After shrinking $U$ if necessary, for each $\ell\in\{0,\ldots,s-1\}$ there exists an integer $c_\ell\ge1$ such that
\begin{equation}\label{eq:AR-residue}
F_{t_\ell+sq}\cG=\mathfrak a^{r(q-c_\ell)}\,F_{t_\ell+sc_\ell}\cG\qquad\text{for every $q\ge c_\ell$}.
\end{equation}
In particular, if
\[
k_1=\max_{0\le\ell\le s-1}\{\,t_\ell+sc_\ell\,\},
\]
then
\begin{equation}\label{eq:AR-eventual}
F_{k+s}\cG=\mathfrak a^r\,F_k\cG\qquad\text{for every integer $k\ge k_1$}.
\end{equation}
\end{theorem}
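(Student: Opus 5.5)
The proof rests on the Artin--Rees lemma, applied stalkwise. Work at a point $x\in U$ and pass to stalks: $\cO_{U,x}$ is Noetherian, $\cP_x$ is a finitely generated module, and $\cG_x\subset\cP_x$ is a submodule. Fix a residue class $\ell$. By \eqref{eq:residue-scalar} we have $\mathcal J^{(\ell)}_q=\mathfrak a^{rq}\mathcal J_{t_\ell}$ for all $q\ge0$. Hence $F^{(\ell)}_q\cG=\cG\cap\mathfrak a^{rq}M$ with $M:=\mathcal J_{t_\ell}\subset\cP$, a finitely generated submodule. Since $\cG\cap M=F_{t_\ell}\cG$ is finitely generated, we get
\[
F_{t_\ell+sq}\cG=\cG\cap\mathfrak a^{rq}M=(\cG\cap M)\cap \mathfrak a^{rq}M .
\]
This is exactly the setting of Artin--Rees for the submodule $N:=\cG\cap M$ of the finitely generated module $M$ with respect to the ideal $I:=\mathfrak a^r$. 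Note $\mathfrak a^{rq}M\subset M$, so the intersection with $\cG$ is the same as the intersection with $N$.

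Artin--Rees then gives $c$ with $N\cap I^{q}M=I^{q-c}(N\cap I^{c}M)$ for $q\ge c$, which is \eqref{eq:AR-residue} at the stalk. Equivalently, the Rees module $\bigoplus_q (N\cap I^qM)T^q$ is a submodule of the finitely generated $\mathcal R(I)$-module $\bigoplus_q I^qM\,T^q$. Since $\mathcal R(I)$ is Noetherian, this Rees module is finitely generated, and its generators live in degrees $\le c$. We can take $c_\ell\ge1$ by enlarging $c$ if needed.

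The step I expect to be the real obstacle is passing from stalks to a neighbourhood, which is why the theorem allows shrinking $U$. I would do this sheaf-theoretically. The graded sheaf $\mathcal R_F^{(\ell)}(\cG;t_\ell)$ is a submodule of the coherent graded $\mathcal R(\mathfrak a^r)$-module $\bigoplus_q \mathfrak a^{rq}\mathcal J_{t_\ell}T^q$, which is a quotient of $\mathcal R(\mathfrak a^r)\otimes\mathcal J_{t_\ell}$. The degreewise pieces $\cG\cap \mathfrak a^{rq}\mathcal J_{t_\ell}$ are coherent, being intersections of coherent subsheaves of $\cP$.

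Here is the concrete route. At the stalk at $x$, choose finitely many generators of the Rees module in degrees $\le c$ (the stalks are Noetherian). Coherence of each graded piece lets these generators extend to a neighbourhood $U_x$. On $U_x$, consider the cokernel of the map from the sub-Rees-module they generate into $\mathcal R_F^{(\ell)}$. Each graded piece of this cokernel is coherent with vanishing stalk at $x$, so it vanishes near $x$. However, infinitely many degrees are involved, so a single shrinking of $U_x$ must handle all of them at once. To get this uniformity, I would view $\bigoplus_q\mathfrak a^{rq}\mathcal J_{t_\ell}T^q$ as a coherent sheaf on the relative space $\operatorname{Specan}\mathcal R(\mathfrak a^r)$, or equivalently on the blow-up/projective side, where coherence of the sub-Rees-sheaf is a single statement. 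Its support-of-cokernel argument then yields one neighbourhood.

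If that machinery proves awkward, the fallback is simpler: the statement only asserts existence after shrinking $U$, so I would prove it on a small Stein neighbourhood of a chosen point. There I would use that sections over a small polydisc of a coherent subsheaf of a coherent Rees algebra module form a Noetherian module over the global Rees algebra (Frisch's theorem on Noetherianity of $\cO(K)$ for compact semianalytic Stein $K$). Then I would apply algebraic Artin--Rees over $\cO(\overline{K})$ directly.

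Finally, set $k_1=\max_\ell(t_\ell+sc_\ell)$. Any integer $k\ge k_1$ can be written uniquely as $k=t_\ell+sq$ with $q\ge c_\ell$. Applying \eqref{eq:AR-residue} at $q$ and at $q+1$ gives $F_{k+s}\cG=\mathfrak a^{r}F_k\cG$, which is \eqref{eq:AR-eventual}.
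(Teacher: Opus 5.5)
Your proposal is correct and is essentially the paper's proof. You rewrite $F_{t_\ell+sq}\cG=(\cG\cap\mathcal J_{t_\ell})\cap\mathfrak a^{rq}\mathcal J_{t_\ell}$, apply Artin--Rees to the inclusion $\cG\cap\mathcal J_{t_\ell}\subset\mathcal J_{t_\ell}$ with respect to $\mathfrak a^r$, and take the maximum over the $s$ residue classes; the paper only asserts the sheaf-level Artin--Rees ``after shrinking $U$'', and your Stein-compactum fallback (Frisch's Noetherianity of $\cO(K)$ plus Cartan's Theorems A and B on $K$) is a sound way to justify the local uniformity the paper leaves implicit, whereas your $\operatorname{Specan}$ sketch would still need an argument that the sub-Rees sheaf is coherent there.
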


\begin{proof}
Fix $\ell$. Iterating~\eqref{eq:residue-scalar} gives
\[
\mathcal J_{t_\ell+sq}=(\mathfrak a^r)^q\,\mathcal J_{t_\ell},\qquad q\ge0.
\]
Hence
\begin{equation}\label{eq:AR-intersection}
F_{t_\ell+sq}\cG=\cG\cap\mathcal J_{t_\ell+sq}=\cG\cap(\mathfrak a^r)^q\,\mathcal J_{t_\ell}=(\cG\cap\mathcal J_{t_\ell})\cap(\mathfrak a^r)^q\,\mathcal J_{t_\ell}.
\end{equation}
Apply the Artin--Rees lemma to the coherent inclusion
\[
\cG\cap\mathcal J_{t_\ell}\subset\mathcal J_{t_\ell}
\]
with respect to the coherent ideal $\mathfrak a^r$. After shrinking $U$, there exists $c_\ell\ge1$ such that, for every $q\ge c_\ell$,
\[
(\cG\cap\mathcal J_{t_\ell})\cap(\mathfrak a^r)^q\,\mathcal J_{t_\ell}=(\mathfrak a^r)^{q-c_\ell}\,\left((\cG\cap\mathcal J_{t_\ell})\cap(\mathfrak a^r)^{c_\ell}\,\mathcal J_{t_\ell}\right).
\]
Using~\eqref{eq:AR-intersection} at $q$ and $c_\ell$ gives~\eqref{eq:AR-residue}. Consequently,
\[
F_{k_\ell+s(q+1)}\cG=\mathfrak a^r\,F_{k_\ell+sq}\cG\qquad(q\ge c_\ell).
\]
Taking the maximum over the finitely many residue classes yields~\eqref{eq:AR-eventual}.
\end{proof}

\begin{theorem}[=\cref{t13}, Skoda periodicity upstairs]\label{thm:upstairs-rational}
For every integer $k\ge k_1$,
\begin{equation}\label{eq:upstairs-rational}
\mathcal E(\pi^*h_{k+s})|_U=\mathfrak a^r\,\mathcal E(\pi^*h_k)|_U.
\end{equation}
\end{theorem}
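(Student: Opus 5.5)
The plan is to transport the identity \eqref{eq:AR-eventual} of \cref{thm:AR-rational} back to $\mathcal V=\mathcal O_U(\pi^*E)$ through the isomorphism \eqref{eq:gram-identification-rational}. The key observation is that $\cA:\mathcal V\to\mathcal P$ is an injective $\mathcal O_U$-linear morphism. Hence $\cA$ maps $\mathcal V$ isomorphically onto $\cG$ as $\mathcal O_U$-modules, and it commutes with multiplication by elements of $\mathcal O_U$. In particular $\cA(\mathfrak a^r M)=\mathfrak a^r\cA(M)$ for every submodule $M\subset\mathcal V$, and these products are taken inside $\mathcal V$ and inside $\cG\subset\mathcal P$ respectively.

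Concretely, fix $k\ge k_1$. By \eqref{eq:gram-identification-rational} applied with $k$ and with $k+s$,
\[
\cA\bigl(\mathcal E(\pi^*h_{k+s})|_U\bigr)=F_{k+s}\cG
\qquad\text{and}\qquad
\cA\bigl(\mathcal E(\pi^*h_k)|_U\bigr)=F_k\cG .
\]
By \eqref{eq:AR-eventual}, $F_{k+s}\cG=\mathfrak a^rF_k\cG$. Linearity of $\cA$ gives $\mathfrak a^rF_k\cG=\cA\bigl(\mathfrak a^r\,\mathcal E(\pi^*h_k)|_U\bigr)$. Thus the two submodules of $\mathcal V$ appearing in \eqref{eq:upstairs-rational} have the same image under $\cA$, and injectivity of $\cA$ forces them to coincide.

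For the reader's benefit I would also record the elementary inclusion $\mathfrak a^r\mathcal E(\pi^*h_k)\subset\mathcal E(\pi^*h_{k+s})$ as a sanity check, although it is not logically needed. It follows from the Cauchy--Schwarz argument of \cref{prop:scalar-skoda-rational} combined with $(k+s)d=kd+r$ and the comparability \eqref{eq:gram-comparison}.

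The only point requiring care is bookkeeping about the chart. \cref{thm:AR-rational} was proved "after shrinking $U$", since Artin--Rees for coherent sheaves is local, so $k_1$ is attached to the (possibly shrunken) chart. This is the meaning of $k_1=k_1(U)$ in \cref{t13}. I expect this to be the main subtlety rather than a real obstacle. If a uniform $k_1$ on the original chart is desired, I would cover a relatively compact subchart by finitely many shrunken neighbourhoods and take the maximum of the resulting indices. This works because the equality \eqref{eq:AR-eventual} for a fixed $k$ is a statement about subsheaves and can be checked stalkwise, and once it holds at $k$ on a neighbourhood it propagates to $k+s$ by the same argument.
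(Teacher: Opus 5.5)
Your proposal is correct and follows the paper's proof, which is the one-line observation that under the isomorphism \eqref{eq:gram-identification-rational} the identity \eqref{eq:upstairs-rational} becomes \eqref{eq:AR-eventual}; your appeal to the $\mathcal O_U$-linearity and injectivity of $\cA$ (so that $\cA(\mathfrak a^rM)=\mathfrak a^r\cA(M)$) just makes that transport explicit. In your bookkeeping paragraph, the ``propagation from $k$ to $k+s$'' is not needed: \cref{thm:AR-rational} already gives the equality for every $k\ge k_1$ on each shrunken piece, so taking the maximum of finitely many indices suffices.
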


\begin{proof}
Under the isomorphism~\eqref{eq:gram-identification-rational}, \eqref{eq:upstairs-rational} is precisely~\eqref{eq:AR-eventual}.
\end{proof}

\begin{remark}
The Artin--Rees theorem gives no reason for the stabilization index $k_1$ to coincide with the scalar threshold $k_0$. The Tor obstruction below measures whether the $s$-step periodicity already holds from $k_0$.
\end{remark}

\subsection{The induced filtration and its Tor obstruction}

We first formulate the algebraic criterion in an abstract setting. Let $U$ be a complex manifold, let $\cP$ be a locally free $\mathcal O_U$-module, let $\cG\subset \cP$ be coherent, and let
\[
\mathcal J_0\supseteq\mathcal J_1\supseteq\mathcal J_2\supseteq\cdots
\]
be a decreasing filtration by coherent submodules. Let $\mathfrak c\subset\mathcal O_U$ be a coherent ideal, fix an integer $s\ge1$, and assume
\begin{equation}\label{eq:abstract-sstep}
\mathcal J_{k+s}=\mathfrak c\,\mathcal J_k,\qquad k\ge k_0.
\end{equation}
Set
\[
 F_k\cG=\cG\cap\mathcal J_k\qquad\text{and}\qquad \cL_k=\frac{\mathcal J_k}{\cG\cap\mathcal J_k}.
\]
By the second isomorphism theorem,
\[
 \cL_k\simeq\frac{\cG+\mathcal J_k}{\cG},
\]
so $\cL_k$ is a coherent submodule of $\cP/\cG$.

\begin{proposition}\label{prop:tor-rational}
For every $k$ there is a natural right-exact sequence
\begin{equation}\label{eq:tor-sequence-rational}
\operatorname{Tor}_1^{\mathcal O_U}(\mathcal O_U/\mathfrak c,\mathcal J_k)\longrightarrow\operatorname{Tor}_1^{\mathcal O_U}(\mathcal O_U/\mathfrak c,\cL_k)\longrightarrow\frac{\cG\cap\mathfrak c\,\mathcal J_k}{\mathfrak c\,(\cG\cap\mathcal J_k)}\longrightarrow0.
\end{equation}
In particular, if
\[
\operatorname{Tor}_1^{\mathcal O_U}(\mathcal O_U/\mathfrak c,\cL_k)=0,
\]
then
\[
\cG\cap\mathfrak c\,\mathcal J_k=\mathfrak c\,(\cG\cap\mathcal J_k).
\]
\end{proposition}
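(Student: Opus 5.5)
The plan is to tensor the short exact sequence
\[
0\longrightarrow \cG\cap\mathcal J_k\longrightarrow \mathcal J_k\longrightarrow \cL_k\longrightarrow 0
\]
with $\mathcal O_U/\mathfrak c$ and read off the long exact Tor sequence. The short exact sequence is just the definition of $\cL_k$; every term is coherent, so the Tor sheaves are coherent and everything may be checked on stalks. The relevant portion of the long exact sequence is
\[
\operatorname{Tor}_1(\mathcal O_U/\mathfrak c,\mathcal J_k)\to\operatorname{Tor}_1(\mathcal O_U/\mathfrak c,\cL_k)\xrightarrow{\ \delta\ }\frac{\cG\cap\mathcal J_k}{\mathfrak c(\cG\cap\mathcal J_k)}\xrightarrow{\ \iota\ }\frac{\mathcal J_k}{\mathfrak c\mathcal J_k}.
\]
Here I use the identification $M\otimes\mathcal O_U/\mathfrak c\simeq M/\mathfrak cM$.

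The next step identifies the image of $\delta$. By exactness, $\operatorname{im}\delta=\ker\iota$. An element $[x]$ with $x\in\cG\cap\mathcal J_k$ lies in $\ker\iota$ exactly when $x\in\mathfrak c\mathcal J_k$, that is, when $x\in\cG\cap\mathfrak c\mathcal J_k$. Hence
\[
\ker\iota=\frac{\cG\cap\mathfrak c\,\mathcal J_k}{\mathfrak c\,(\cG\cap\mathcal J_k)}.
\]
This uses the inclusion $\mathfrak c(\cG\cap\mathcal J_k)\subseteq\cG\cap\mathfrak c\mathcal J_k$, which is obvious. Replacing the third term by $\ker\iota$ and using exactness at $\operatorname{Tor}_1(\mathcal O_U/\mathfrak c,\cL_k)$ gives the sequence \eqref{eq:tor-sequence-rational}. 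It is right exact because $\delta$ maps onto $\ker\iota$.

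Naturality comes from functoriality of the long exact Tor sequence in the short exact sequence. The sequence itself is functorial in the data $(\cG,\mathcal J_k)$, for instance with respect to the inclusions $\mathcal J_{k+1}\subset\mathcal J_k$.

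The final claim is then immediate. If $\operatorname{Tor}_1(\mathcal O_U/\mathfrak c,\cL_k)=0$, surjectivity onto the quotient forces it to vanish, which gives $\cG\cap\mathfrak c\,\mathcal J_k=\mathfrak c\,(\cG\cap\mathcal J_k)$.

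There is no real obstacle in this argument. The only point needing care is the identification of $\ker\iota$: one must check that the image of $\cG\cap\mathcal J_k$ in $\mathcal J_k/\mathfrak c\mathcal J_k$ has the stated kernel, which is the elementwise computation above. The hypothesis \eqref{eq:abstract-sstep} is not needed here. It becomes relevant only afterwards, when one takes $\mathfrak c=\mathfrak a^r$ and rewrites $\mathfrak c\mathcal J_k=\mathcal J_{k+s}$, so that the quotient becomes $F_{k+s}\cG/\mathfrak a^rF_k\cG$.
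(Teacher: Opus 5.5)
Your proposal is correct and follows the paper's proof: tensor $0\to F_k\cG\to\mathcal J_k\to\cL_k\to0$ with $\mathcal O_U/\mathfrak c$, then identify the kernel of $F_k\cG/\mathfrak c\,F_k\cG\to\mathcal J_k/\mathfrak c\,\mathcal J_k$ with $(\cG\cap\mathfrak c\,\mathcal J_k)/\mathfrak c\,(\cG\cap\mathcal J_k)$. You also spell out the step $F_k\cG\cap\mathfrak c\,\mathcal J_k=\cG\cap\mathfrak c\,\mathcal J_k$ (using $\mathfrak c\,\mathcal J_k\subseteq\mathcal J_k$), which the paper leaves implicit.
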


\begin{proof}
Tensor
\[
0\longrightarrow F_k\cG\longrightarrow\mathcal J_k\longrightarrow \cL_k\longrightarrow0
\]
with $\mathcal O_U/\mathfrak c$. The relevant part of the long exact Tor
sequence is
\[
\operatorname{Tor}_1(\mathcal O_U/\mathfrak c,\mathcal J_k)\longrightarrow\operatorname{Tor}_1(\mathcal O_U/\mathfrak c,\cL_k)\longrightarrow\frac{F_k\cG}{\mathfrak c\,F_k\cG}\longrightarrow\frac{\mathcal J_k}{\mathfrak c\,\mathcal J_k}.
\]
The kernel of the last map is
\[
\frac{F_k\cG\cap\mathfrak c\,\mathcal J_k}{\mathfrak c\,F_k\cG}=\frac{\cG\cap\mathfrak c\,\mathcal J_k}{\mathfrak c\,(\cG\cap\mathcal J_k)},
\]
which proves~\eqref{eq:tor-sequence-rational}.
\end{proof}

\begin{theorem}[Tor criterion for $s$-step Rees-standardness]
\label{thm:tor-rational}
Assume~\eqref{eq:abstract-sstep} and
\[
\operatorname{Tor}_1^{\mathcal O_U}(\mathcal O_U/\mathfrak c,\cL_k)=0,\qquad k\ge k_0.
\]
Then
\[
F_{k+s}\cG=\mathfrak c\,F_k\cG,\qquad k\ge k_0.
\]
Consequently, for each residue class $\ell$ and the corresponding $k_\ell\ge k_0$,
\begin{equation}\label{eq:residue-standard}
F_{k_\ell+sq}\cG=\mathfrak c^q\,F_{k_\ell}\cG,\qquad q\ge0.
\end{equation}
\end{theorem}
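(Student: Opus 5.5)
The argument is a direct combination of the hypothesis \eqref{eq:abstract-sstep} with \cref{prop:tor-rational}. Fix $k\ge k_0$. By \eqref{eq:abstract-sstep} we have $\mathcal J_{k+s}=\mathfrak c\,\mathcal J_k$. Intersecting with $\cG$ gives
\[
F_{k+s}\cG=\cG\cap\mathcal J_{k+s}=\cG\cap\mathfrak c\,\mathcal J_k .
\]
The vanishing assumption $\operatorname{Tor}_1^{\mathcal O_U}(\mathcal O_U/\mathfrak c,\cL_k)=0$ makes the middle term of the right-exact sequence \eqref{eq:tor-sequence-rational} zero. Hence the third term vanishes, which means
\[
\cG\cap\mathfrak c\,\mathcal J_k=\mathfrak c\,(\cG\cap\mathcal J_k)=\mathfrak c\,F_k\cG .
\]
Combining the two displays yields $F_{k+s}\cG=\mathfrak c\,F_k\cG$ for every $k\ge k_0$.

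For \eqref{eq:residue-standard}, fix a residue class $\ell$ and take $k_\ell\ge k_0$ in that class, for instance $k_\ell=t_\ell$. I would induct on $q$. The case $q=0$ is trivial. For the inductive step, $k_\ell+sq\ge k_0$, so the first part gives
\[
F_{k_\ell+s(q+1)}\cG=\mathfrak c\,F_{k_\ell+sq}\cG=\mathfrak c\cdot\mathfrak c^{q}F_{k_\ell}\cG=\mathfrak c^{q+1}F_{k_\ell}\cG .
\]
This uses only the associativity of products of ideals with submodules.

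The one point needing care is that all equalities are equalities of subsheaves of $\cP$, checked stalkwise. The Tor vanishing is a sheaf-level hypothesis, and Tor commutes with taking stalks, so the stalkwise application of \cref{prop:tor-rational} is legitimate. Beyond this bookkeeping, I do not expect any real obstacle: the substantive input is entirely contained in \cref{prop:tor-rational}.
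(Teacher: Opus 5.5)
Your proposal is correct and follows the paper's proof essentially verbatim. You intersect $\mathcal J_{k+s}=\mathfrak c\,\mathcal J_k$ with $\cG$, use the vanishing of $\operatorname{Tor}_1^{\mathcal O_U}(\mathcal O_U/\mathfrak c,\cL_k)$ in the right-exact sequence of Proposition~\ref{prop:tor-rational} to get $\cG\cap\mathfrak c\,\mathcal J_k=\mathfrak c\,F_k\cG$, and then iterate along each residue class modulo $s$, with your induction on $q$ spelling out the paper's one-line iteration step.
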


\begin{proof}
Using \eqref{eq:abstract-sstep} and \cref{prop:tor-rational},
\[
F_{k+s}\cG=\cG\cap\mathcal J_{k+s}=\cG\cap\mathfrak c\,\mathcal J_k=\mathfrak c\,(\cG\cap\mathcal J_k)=\mathfrak c\,F_k\cG.
\]
Iteration along each residue class modulo $s$ gives~\eqref{eq:residue-standard}.
\end{proof}

In the present analytic setting, we apply this theorem with $\mathfrak c=\mathfrak a^r$.

\subsection{Geometric sufficient conditions}

We record several convenient sufficient conditions for
\[
\operatorname{Tor}_1^{\mathcal O_U}(\mathcal O_U/\mathfrak a^r,\cL_k)=0.
\]

\paragraph{The locally principal case.}
Suppose $\mathfrak a=(f)$, where $f$ is a non-zero divisor in $\mathcal O_U$. Then $\mathfrak a^r=(f^r)$ and
\[
\operatorname{Tor}_1^{\mathcal O_U}(\mathcal O_U/(f^r),\cL_k)\simeq\ker(f^r:\cL_k\to \cL_k).
\]
Hence it is enough that multiplication by $f$ be injective on $\cL_k$. Since $\cL_k\subset \cP/\cG$, it is enough that $f$ be a non-zero divisor on $\cP/\cG$, equivalently
\[
f\notin\bigcup_{\mathfrak p\in\operatorname{Ass}(\cP/\cG)}\mathfrak p.
\]
Geometrically, this means that $V(f)$ contains no associated component, including no embedded associated component, of $\cP/\cG$. If $\cG$ is locally a direct summand of $\cP$, then $\cP/\cG$ is locally free, and any holomorphic function $f$ which is not identically zero on a connected component is a non-zero divisor on $\cP/\cG$ and on every $\cL_k\subset \cP/\cG$.

\paragraph{Complete intersections.}
Suppose
\[
 \mathfrak a=(f_1,\ldots,f_c)
\]
is generated by an $\mathcal O_U$-regular sequence and that the same sequence is $\cL_k$-regular. The Koszul complex gives
\begin{equation}\label{eq:tor-a}
\operatorname{Tor}_j^{\mathcal O_U}(\mathcal O_U/\mathfrak a,\cL_k)=0,\qquad j>0.
\end{equation}
Since $\mathfrak a$ is a complete intersection ideal,
\[
\mathfrak a^m/\mathfrak a^{m+1}\simeq\operatorname{Sym}^m_{\mathcal O_U/\mathfrak a}(\mathfrak a/\mathfrak a^2)
\]
is locally free over $\mathcal O_U/\mathfrak a$.  Hence~\eqref{eq:tor-a} implies
\[
\operatorname{Tor}_j^{\mathcal O_U}(\mathfrak a^m/\mathfrak a^{m+1},\cL_k)=0,\qquad j>0.
\]
Induction on $m$ using the exact sequences
\[
0\longrightarrow\mathfrak a^m/\mathfrak a^{m+1}\longrightarrow\mathcal O_U/\mathfrak a^{m+1}\longrightarrow\mathcal O_U/\mathfrak a^m\longrightarrow0
\]
therefore yields
\[
\operatorname{Tor}_j^{\mathcal O_U}(\mathcal O_U/\mathfrak a^r,\cL_k)=0,\qquad j>0.
\]
A standard geometric sufficient condition for the required $\cL_k$-regularity is that, near $V(\mathfrak a)$, $\cL_k$ be Cohen--Macaulay, $\mathfrak a$ define a local complete intersection of codimension $c$, and
\[
\operatorname{codim}_{\operatorname{Supp}\cL_k}\bigl(V(\mathfrak a)\cap\operatorname{Supp}\cL_k\bigr)=c.
\]

\paragraph{Local freeness.}
If $\cL_k$ is locally free near $V(\mathfrak a)$, then it is flat there. Away from $V(\mathfrak a)$ the sheaf $\mathcal O_U/\mathfrak a^r$ vanishes. Hence all positive Tor sheaves vanish.

Combining these observations with \cref{thm:upstairs-rational,thm:tor-rational} yields the following.

\begin{theorem}[Skoda periodicity upstairs without delay]
\label{thm:upstairs-rational-no-delay}
Let $k_1$ be the stabilization index from \cref{thm:AR-rational}. Assume that
\[
\operatorname{Tor}_1^{\mathcal O_U}(\mathcal O_U/\mathfrak a^r,\cL_k)=0\qquad\text{for every integer }k_0\le k<k_1.
\]
Then
\begin{equation}\label{eq:no-delay-rational}
\mathcal E(\pi^*h_{k+s})|_U=\mathfrak a^r\,\mathcal E(\pi^*h_k)|_U,\qquad k\ge k_0.
\end{equation}
The Tor condition holds, in particular, under the corresponding geometric hypotheses described above.
\end{theorem}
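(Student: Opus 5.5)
The argument splits the range $k\ge k_0$ at the stabilization index $k_1$ from \cref{thm:AR-rational}. Throughout, we work on $\cG=\operatorname{im}\cA\subset\cP$ through the isomorphism \eqref{eq:gram-identification-rational}. This isomorphism is $\mathcal O_U$-linear, so it carries $\mathfrak a^r\,\mathcal E(\pi^*h_k)|_U$ onto $\mathfrak a^r F_k\cG$. It therefore suffices to prove
\[
F_{k+s}\cG=\mathfrak a^r\,F_k\cG\qquad\text{for every integer }k\ge k_0.
\]

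For $k\ge k_1$ this identity is exactly \eqref{eq:AR-eventual}, which is the content of \cref{thm:upstairs-rational}; nothing further is needed there.

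For $k_0\le k<k_1$ we use the abstract setup of \cref{prop:tor-rational}, taking $\mathfrak c=\mathfrak a^r$ and the filtration $\mathcal J_k$. The hypothesis \eqref{eq:abstract-sstep} holds from $k_0$ on by \eqref{eq:J-rational} in \cref{prop:scalar-skoda-rational}. Since $\operatorname{Tor}_1(\mathcal O_U/\mathfrak a^r,\cL_k)=0$ for these $k$, the right-exact sequence \eqref{eq:tor-sequence-rational} forces
\[
\cG\cap\mathfrak a^r\mathcal J_k=\mathfrak a^r(\cG\cap\mathcal J_k).
\]
Combining this with $\mathcal J_{k+s}=\mathfrak a^r\mathcal J_k$ reproduces the chain of equalities in the proof of \cref{thm:tor-rational}:
\[
F_{k+s}\cG=\cG\cap\mathcal J_{k+s}=\cG\cap\mathfrak a^r\mathcal J_k=\mathfrak a^r(\cG\cap\mathcal J_k)=\mathfrak a^rF_k\cG.
\]
We do not cite \cref{thm:tor-rational} wholesale, because it assumes the Tor vanishing for all $k\ge k_0$, while here it is assumed only on the finite window $k_0\le k<k_1$. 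Its proof is pointwise in $k$, however, so the same argument applies to each $k$ in the window. Together with the range $k\ge k_1$, this gives \eqref{eq:no-delay-rational}.

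The argument above uses the Tor condition only at the indices where it is assumed, and the cases $k<k_1$ and $k\ge k_1$ together cover all $k\ge k_0$. The step needing the most care is the last sentence of the theorem, that the Tor condition holds under the geometric hypotheses. Here we invoke the three paragraphs of the preceding subsection in turn. In the principal case, $f^r$ is a nonzerodivisor on $\cP/\cG\supset\cL_k$. In the complete intersection case, the Koszul complex argument together with induction on the powers $\mathfrak a^m$ applies. In the locally free case, $\cL_k$ is flat near $V(\mathfrak a)$. We check that each hypothesis is imposed for the finitely many $k$ with $k_0\le k<k_1$, possibly after shrinking $U$ consistently with the shrinking already done in \cref{thm:AR-rational}.
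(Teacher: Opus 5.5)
Your proof is correct and is essentially the paper's argument. You split the range at $k_1$, use \eqref{eq:AR-eventual} for $k\ge k_1$, apply \cref{prop:tor-rational} with $\mathfrak c=\mathfrak a^r$ and the scalar relation $\mathcal J_{k+s}=\mathfrak a^r\mathcal J_k$ at each $k_0\le k<k_1$, and transport back through the $\mathcal O_U$-linear identification \eqref{eq:gram-identification-rational}; like you, the paper applies the proposition index by index rather than \cref{thm:tor-rational} wholesale, and treats the last sentence as a direct appeal to the preceding geometric paragraphs.
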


\begin{proof}
For $k_0\le k<k_1$, \cref{prop:tor-rational}, applied with $\mathfrak c=\mathfrak a^r$ and the scalar relation $\mathcal J_{k+s}=\mathfrak a^r\,\mathcal J_k$, gives
\[
 F_{k+s}\cG=\mathfrak a^r\,F_k\cG.
\]
For $k\ge k_1$ the same equality follows from \cref{thm:AR-rational}. The isomorphism \eqref{eq:gram-identification-rational} then gives \eqref{eq:no-delay-rational}.
\end{proof}

\subsection{Descent and the defect sheaf}

We now assume that the analytic singularities of $\psi$ are induced by a coherent ideal sheaf $\mathfrak b\subset\mathcal O_X$ with the same rational coefficient $d=r/s$. Fix $x\in X$ and, after shrinking around $x$, choose a relatively compact neighbourhood $W\Subset X$ on which
\[
\psi=\frac rs\log\!\left(\sum_{j=1}^{q}|b_j|^2\right)+O(1),\qquad\mathfrak b=(b_1,\ldots,b_q).
\]
Put $X'_W=\pi^{-1}(W)$, $\pi_W=\pi|_{X'_W}$, and $\mathfrak a=\mathfrak b\,\mathcal O_{X'_W}$. On each scalarization chart, the generators $g_j$ may therefore be chosen as the pull-backs $\pi_W^*b_j$.

After shrinking $W$ once more, we may assume that $\overline W$ is compact and contained in the neighbourhood under consideration. Since $\pi$ is proper, $\pi^{-1}(\overline W)$ is compact and can therefore be covered by finitely many scalarization charts. Let $k_0$ be a common scalar threshold on this finite cover and let $k_1\ge k_0$ be a common Artin--Rees stabilization index. By \cref{thm:upstairs-rational},
\begin{equation}\label{e32}
\mathcal E(\pi_W^*h_{k+s})=\mathfrak a^r\,\mathcal E(\pi_W^*h_k),\qquad k\ge k_1.
\end{equation}
If the finite Tor conditions in \cref{thm:upstairs-rational-no-delay} hold on this cover, then \eqref{e32} holds for every $k\ge k_0$. Set
\[
\mathcal F_k=K_{\pi^{-1}(W)}\otimes\mathcal E(\pi_W^*h_k).
\]
There is a natural inclusion
\begin{equation}\label{e33}
\mathfrak b^r\,\pi_{W*}\mathcal F_k\subseteq\pi_{W*}(\mathfrak a^r\,\mathcal F_k).
\end{equation}
Indeed, a local section of the left-hand side is a finite sum $\sum_\gamma c_\gamma s_\gamma$ with $c_\gamma\in\mathfrak b^r$. After pull-back, each coefficient lies in $\mathfrak a^r$, so the resulting section belongs to $\mathfrak a^r\,\mathcal F_k$.

\begin{theorem}[=\cref{t14}, Local Skoda periodicity downstairs]\label{t59}
With $W,k_0,k_1$ as above, for every integer $k\ge k_1$,
\begin{equation}\label{e34}
\mathfrak b^r\,\mathcal E(h_k)|_W\subseteq\mathcal E(h_{k+s})|_W.
\end{equation}
Equality holds if and only if
\[
\pi_{W*}(\mathfrak a^r\,\mathcal F_k)=\mathfrak b^r\,\pi_{W*}\mathcal F_k.
\]
More precisely, there is a natural isomorphism
\begin{equation}\label{e35}
K_W\otimes\frac{\mathcal E(h_{k+s})|_W}{\mathfrak b^r\,\mathcal E(h_k)|_W}\simeq\frac{\pi_{W*}(\mathfrak a^r\,\mathcal F_k)}{\mathfrak b^r\,\pi_{W*}\mathcal F_k}.
\end{equation}
We call the quotient on the right the $(r,s)$-descent defect sheaf on $W$. If the finite Tor hypotheses of \cref{thm:upstairs-rational-no-delay} hold on the chosen cover, then the conclusions above hold for every $k\ge k_0$.
\end{theorem}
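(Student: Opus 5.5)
The plan is to push the upstairs periodicity \eqref{e32} down with the canonical-form transformation rule \cref{prop:birational-covariance}, and then compare the two submodules of $K_W\otimes\mathcal E(h_{k+s})$ obtained this way. First, tensor \eqref{e32} with the invertible sheaf $K_{X'_W}$. Tensoring with an invertible sheaf commutes with multiplication by the ideal $\mathfrak a^r$, so this gives
\[
\mathcal F_{k+s}=\mathfrak a^r\,\mathcal F_k,\qquad k\ge k_1.
\]
Applying $\pi_{W*}$ and \cref{prop:birational-covariance} with $t=k+s$ and with $t=k$ then yields
\[
K_W\otimes\mathcal E(h_{k+s})|_W=\pi_{W*}\mathcal F_{k+s}=\pi_{W*}(\mathfrak a^r\,\mathcal F_k),\qquad K_W\otimes\mathcal E(h_k)|_W=\pi_{W*}\mathcal F_k.
\]
Multiplying the second identity by $\mathfrak b^r$ gives $K_W\otimes\mathfrak b^r\,\mathcal E(h_k)|_W=\mathfrak b^r\,\pi_{W*}\mathcal F_k$.

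Second, the inclusion \eqref{e33} shows that $\mathfrak b^r\,\pi_{W*}\mathcal F_k\subseteq\pi_{W*}(\mathfrak a^r\mathcal F_k)$. By the identifications above, this is exactly the statement $K_W\otimes\mathfrak b^r\,\mathcal E(h_k)|_W\subseteq K_W\otimes\mathcal E(h_{k+s})|_W$. Since $K_W$ is invertible, we can untwist and obtain \eqref{e34}. One could also check \eqref{e34} directly: by Cauchy--Schwarz, $|b|^{2r/s}\sim e^{\psi}$, so a product $bF$ with $b\in\mathfrak b^r$ satisfies $|bF|^2_{h_{k+s}}\lesssim|F|^2_{h_k}$. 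Passing both submodules of $K_W\otimes\mathcal E(h_{k+s})|_W$ to the quotient then gives the natural isomorphism \eqref{e35}. The equality criterion follows at once, because the left side of \eqref{e35} vanishes if and only if the right side does, and again $K_W$ is invertible.

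Third, if the finite Tor hypotheses of \cref{thm:upstairs-rational-no-delay} hold on each chart of the chosen finite cover, then \eqref{e32} holds for all $k\ge k_0$ on every chart. The same three steps then go through unchanged with $k_1$ replaced by $k_0$.

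The step needing most care is the gluing in the first step. Periodicity \eqref{e32} is proved chart by chart, and we must know it gives an equality of subsheaves on all of $X'_W$, so that $\pi_{W*}$ can be applied. Here I would observe that $\mathfrak a=\mathfrak b\,\mathcal O_{X'_W}$ is a global ideal on $X'_W$ and that $\mathcal E(\pi_W^*h_k)$ is intrinsically defined. Equality of two global subsheaves is local, so it suffices to check it on each chart of the finite cover, with $k_1$ taken as the common index. I also need that the chart-wise generators $g_j=\pi_W^*b_j$ give the same ideal $\mathfrak a$ as in \cref{thm:upstairs-rational}. Finally, I need the identification of $\mathfrak b^r\,\pi_{W*}\mathcal F_k$ with $K_W\otimes\mathfrak b^r\,\mathcal E(h_k)$ under \cref{prop:birational-covariance}. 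This is compatibility of the projection-formula identification with multiplication by functions from the base, and it holds because pull-back of forms is $\mathcal O_W$-linear.
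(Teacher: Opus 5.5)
Your proposal is correct and follows the paper's proof essentially step for step. You tensor the upstairs periodicity \eqref{e32} with the canonical bundle, identify $\pi_{W*}\mathcal F_k$ and $\pi_{W*}(\mathfrak a^r\mathcal F_k)=\pi_{W*}\mathcal F_{k+s}$ with $K_W\otimes\mathcal E(h_k)|_W$ and $K_W\otimes\mathcal E(h_{k+s})|_W$ via \cref{prop:birational-covariance}, combine this with \eqref{e33}, and untwist by the invertible $K_W$ to get \eqref{e34}, \eqref{e35} and the equality criterion, with the Tor hypotheses just lowering the threshold to $k_0$. Your side remark is also right: the direct check via $e^{-s\psi}\asymp(\sum_j|b_j|^2)^{-r}$ shows that the inclusion \eqref{e34} holds for every $k\ge0$, so the threshold $k_1$ is genuinely needed only for the identification \eqref{e35} and the equality criterion.
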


\begin{proof}
Upstairs periodicity gives $\mathcal F_{k+s}=\mathfrak a^r\,\mathcal F_k$ on $X'_W$. By \cref{prop:birational-covariance}, applied to $\pi_W$,
\[
\pi_{W*}\mathcal F_k=K_W\otimes\mathcal E(h_k)|_W\qquad\text{and}\qquad\pi_{W*}(\mathfrak a^r\,\mathcal F_k)=K_W\otimes\mathcal E(h_{k+s})|_W.
\]
Substituting these identities into \eqref{e33} and tensoring by $K_X^{-1}|_W$ gives \eqref{e34}. Because $K_W$ is invertible, passing to the quotient gives \eqref{e35}; the equality criterion and the assertion from $k_0$ follow immediately.

The defect also admits a useful restriction-map interpretation. Applying $\pi_{W*}$ to
\[
0\longrightarrow\mathfrak a^r\,\mathcal F_k\longrightarrow\mathcal F_k\longrightarrow\mathcal F_k/\mathfrak a^r\,\mathcal F_k\longrightarrow0
\]
gives a restriction morphism
\[
\rho_{k,W}:\pi_{W*}\mathcal F_k\longrightarrow\pi_{W*}(\mathcal F_k/\mathfrak a^r\,\mathcal F_k)
\]
with kernel $\pi_{W*}(\mathfrak a^r\,\mathcal F_k)$. Since $\mathfrak b^r\,\pi_{W*}\mathcal F_k\subseteq\ker\rho_{k,W}$, it factors through
\[
\beta^{(r,s)}_{k,W}:\frac{\pi_{W*}\mathcal F_k}{\mathfrak b^r\,\pi_{W*}\mathcal F_k}\longrightarrow\pi_{W*}(\mathcal F_k/\mathfrak a^r\,\mathcal F_k),
\]
and
\[
\ker\beta^{(r,s)}_{k,W}=\frac{\pi_{W*}(\mathfrak a^r\,\mathcal F_k)}{\mathfrak b^r\,\pi_{W*}\mathcal F_k}.
\]
Thus equality in \eqref{e34} on $W$ is equivalent to injectivity of $\beta^{(r,s)}_{k,W}$. 
\end{proof}

If $X$ is compact, finitely many such neighbourhoods $W$ cover $X$. Taking the maximum of their stabilization indices gives a global $k_1$, and \cref{t59} then holds on all of $X$ for every $k\ge k_1$.

\section*{Acknowledgments}

The author would like to thank Prof.Jixiang Fu for helpful discussions and valuable suggestions. The counterexample in \cref{sec:failure} was first carried out by the ChatGPT Pro, and the author subsequently checked, refined, and wrote out the construction. The author was supported by the NSFC, Grant No.~12271275.


\bigskip

\noindent
\textsc{Jingcao Wu}\\
School of Mathematics\\
Shanghai University of Finance and Economics\\
Shanghai 200433, People's Republic of China\\
\href{mailto:wujincao@shufe.edu.cn}
     {\texttt{wujincao@shufe.edu.cn}}

\end{document}